\tikzstyle{map}=[->,semithick]
\tikzstyle{arc}=[bend left,->,semithick]
\tikzstyle{rinclusion}=[right hook->,semithick]
\tikzstyle{linclusion}=[left hook->,semithick]
\def\S{{\mathbb S}}
\def\Ri{\mathcal R}
\def\M{{\mathcal M}}
\def\R{{\mathbb R}}
\def\B{{\mathbb B}}
\def\C{\mathcal C}
\def\K{{\mathcal K}}
\def\L{{\mathcal L}}
\newcommand{\defref}[1]{Definition~\ref{def:#1}}
\newcommand{\lemref}[1]{Lemma~\ref{lem:#1}}
\newcommand{\remref}[1]{Remark~\ref{rem:#1}}
\newcommand{\propref}[1]{Proposition~\ref{prop:#1}}
\newcommand{\secref}[1]{Section~\ref{sec:#1}}
\newcommand{\subsecref}[1]{Subsection~\ref{subsec:#1}}
\newcommand{\thmref}[1]{Theorem~\ref{thm:#1}}
\newcommand{\eqnref}[1]{(\ref{eqn:#1})}
\newcommand{\diam}[2][\M]{\mathrm{diam}_{#1}\left(#2\right)}
\newcommand{\eqdef}{\stackrel{def}{=}}%
\newcommand{\supp}[1]{\mathrm{supp}{#1}}
\newcommand{\os}[1]{\left\langle #1 \right\rangle}
\newcommand{\sd}[2][]{\mathrm{sd}^{#1}\left(#2\right)}
\newcommand{\norm}[1]{\left\lVert #1 \right\rVert}
\renewcommand{\mod}[1]{\big\lvert #1 \big\rvert}
\newcommand{\bc}[1]{\widehat{#1}}
\newcommand{\map}[1][] {\xrightarrow{\quad#1\quad}}
\theoremstyle{plain}%
\newtheorem{theorem}{Theorem}[section]
\newtheorem{proposition}[theorem]{Proposition}%
\newtheorem{lemma}[theorem]{Lemma}
\theoremstyle{definition}
\newtheorem{definition}[theorem]{Definition}
\theoremstyle{remark}
\newtheorem{remark}[theorem]{Remarks}%
\author{Sushovan Majhi}
\address{George Washington University, Washington D.C., USA}
\email{s.majhi@gwu.edu}
\title{Demystifying Latschev's Theorem: Manifold Reconstruction from Noisy Data}
\keywords{Vietoris--Rips complex, submanifold reconstruction, manifold
reconstruction, Latschev's theorem, homotopy Equivalence}
\subjclass[2020]{55P10 (Primary), 55N31, 54E35 (Secondary)}
\begin{document}
	
\begin{abstract}
For a closed Riemannian manifold $\mathcal{M}$ and a metric space $S$ with a
small Gromov--Hausdorff distance to it, Latschev's theorem guarantees the
existence of a sufficiently small scale $\beta>0$ at which the Vietoris--Rips
complex of $S$ is homotopy equivalent to $\mathcal{M}$. Despite being regarded
as a stepping stone to the topological reconstruction of Riemannian manifolds
from noisy data, the result is only a qualitative guarantee. Until now, it had
been elusive how to quantitatively choose such a proximity scale $\beta$ in
order to provide sampling conditions for $S$ to be homotopy equivalent to
$\mathcal{M}$. In this paper, we prove a stronger and pragmatic version of
Latschev's theorem, facilitating a simple description of $\beta$ using the
sectional curvatures and convexity radius of $\mathcal{M}$ as the sampling
parameters.
Our study also delves into the topological recovery of a closed Euclidean
submanifold from the Vietoris--Rips complexes of a Hausdorff close Euclidean
subset. As already known for \v{C}ech complexes, we show that Vietoris--Rips
complexes also provide topologically faithful reconstruction guarantees for
submanifolds. 
\end{abstract}

\maketitle

\section{Introduction}
Given a metric space $(X,d_X)$ and a positive proximity scale $\beta$, the
\emph{Vietoris--Rips complex} of $X$, denoted $\Ri_\beta(X)$, is defined to be
an abstract simplicial complex having an $m$--simplex for every finite subset of
$X$ with cardinality $m+1$ and diameter less than $\beta$. 

The notion was first introduced by L. Vietoris \cite{Vietoris1927}, then
extensively studied by E. Rips in the context of hyperbolic groups. Despite its
inception in the early twentieth century, it is only the last decade that
witnessed an increasing popularity of these complexes---particularly in the
applied topology and topological data analysis (TDA) community.  The
computational simplicity makes the Vietoris--Rips complexes more palatable in
applications than its traditional alternatives, e.g., the \v{C}ech complexes.

The combinatorial flexibility, however, comes at a theoretical cost. The
topology of the Vietoris--Rips complex of (even a finite) metric space is
generally poorly understood. Nonetheless, there have been noteworthy
developments in the study of the Vietoris--Rips complexes constructed on or near
Riemannian manifolds \cite{hausmann_1995,latschev_2001} and metric graphs
\cite{Majhi2023}.

\subsection{Motivation}
In his pioneering work \cite{hausmann_1995}, Hausmann established a homotopy
equivalence between a closed Riemannian manifold $\M$ and its Vietoris--Rips
complex at a scale $\beta$ smaller than the convexity radius (\defref{conv-rad})
of $\M$. As a naive exercise \cite[Problem 3.11]{hausmann_1995}, Hausmann asked
the curious question of \emph{finite reconstruction}: for a dense enough subset
$S\subset\M$ and small enough scale $\beta$, is $\M$ also homotopy equivalent to
the Vietoris--Rips complex of $S$?

The quest foreshadowed Latschev's remarkable result \cite[Theorem
1.1]{latschev_2001}: for every closed Riemannian manifold $\M$, there exists a
positive number $\epsilon_0$ such that for any $0<\beta\leq\epsilon_0$ there
exists some $\delta>0$ such that for every metric space $S$ with
Gromov--Hausdorff distance to $\M$ less than $\delta$, the Vietoris--Rips
complex $\Ri_\beta(S)$ is homotopy equivalent to $\M$. The result is only
qualitative in nature, guaranteeing the existence of a sufficiently small scale
$\beta$ such that $\M$ is homotopy equivalent to the Vietoris--Rips complex of a
metric space $(S,d_S)$ that is close to $\M$ in the Gromov--Hausdorff distance
(\defref{gh}). The result has been regarded as a stepping stone to the finite
reconstruction of an abstract Riemannian manifold from a noisy sample. Despite
the qualitative guarantee, it is not apparently clear how to quantitatively
choose such an $\epsilon_0$ for a given manifold $\M$. 

The current paper primarily aims at presenting the first quantitative version
(\thmref{gh-hom}) of Latschev's theorem in order to develop a provable
Vietoris--Rips inspired manifold reconstruction scheme from a noisy sample. Our
sampling conditions---for a faithful reconstruction---are given based on the
convexity radius (\defref{conv-rad}) and the upper bound of the sectional
curvatures of $\M$. We recognize these parameters to be very natural in the
context of Riemannian manifolds---strict enough to prove the desired homotopy
equivalences and flexible enough to retain practicality. 

Our techniques naturally extend to the topological reconstruction of a Euclidean
submanifold from a Hausdorff--close sample. For a sufficiently small scale, we
prove in (\thmref{h-hom}) that a submanifold is homotopy equivalent to the
Vietoris--Rips complex of a dense sample. In the Euclidean case, we describe the
sampling conditions using the reach (\defref{reach}) of the submanifold.

\subsection{Related Work}
In the same vein, Majhi \cite{Majhi2023} has recently studied the Vietoris--Rips
complexes near a special class of geodesic spaces: metric graphs. For the
Vietoris--Rips complexes of a noisy sample, the author provides sampling
conditions for a faithful topological recovery of metric graphs under both the
Gromov--Hausdorff and Hausdorff noise. Our investigation revolves around the
same reconstruction theme, but for Riemannian manifolds. Since graphs can
generally have branches and non-smooth corners, it is worth noting that the
relevance of the results of Majhi \cite{Majhi2023} on metric graphs are not
subsumed by our results on manifolds.

Relevant in this context are the works of Adams et al.
\cite{Adamaszek2018,Adams_2019}, where the homotopy equivalence results of
Hausmann \cite{hausmann_1995} and Latschev \cite{latschev_2001} have been
restated in terms of the Vietoris--Rips thickening via the theory of optimal
transport. We also mention \cite{Virk2021-lc} and \cite{Lemez2022-au} for
providing an alternative and much simpler proof of Hausmann's theorem and
extending Latschev's result to selective Rips complexes, respectively.

In this paper, we also further our understanding of the Vietoris--Rips complexes
in another fundamental direction: the recovery of a Euclidean submanifold
$\M\subset\R^d$ from a noisy Euclidean sample $S\subset\R^d$ in its close
Hausdorff proximity. There have been several approaches to recover the topology
(sometimes only the homology/homotopy groups) of a submanifold using the
\v{C}ech and Vietoris--Rips complexes and their filtrations, as we survey in the
following paragraph.

In a landmark paper by Niyogi et al. \cite{SMALE}, the normal injectivity radius
of $\M$ has been recognized as the sampling parameter; for a sufficiently small
scale, the \v{C}ech complexes of a dense sample $S\subset\M$ have been shown to
be homotopy equivalent to $\M$. For the recovery of homology/homotopy groups of
Euclidean shapes using filtrations of the \v{C}ech and Vietoris--Rips complexes,
we mention the works of Chazal et al. \cite{chazal2014persistence,CHAZALSTAB}
using the weak feature size of compact sets and Majhi et al.
\cite{fasy2018reconstruction} using the convexity radius and distortion of
geodesic subspaces. For a submanifold $\M$, we ask a more ambitious inference
question: are the Vietoris--Rips complexes of a Hausdorff--close sample homotopy
equivalent to $\M$ for a sufficiently small scale? 

Attali et al. \cite[Theorem 14]{ATTALI2013448} and Kim et al. \cite[Theorem
20]{kim2020homotopy} show that a Euclidean subset with a positive reach (more
generally $\mu$--reach) is homotopy equivalent to the Vietoris--Rips complexes
of a noisy point-cloud. Our \thmref{h-hom} puts forward a similar result for
submanifold reconstruction in terms of the reach of $\M$. Although, some results
in \cite{ATTALI2013448,kim2020homotopy} generalize and improve over
\thmref{h-hom}. 

\subsection{Our Contribution}
One of the major contributions of this work is to quantify the scale parameters
at which the Vietoris--Rips complex of a sample $S$ recovers (up to homotopy
type) a Riemannian manifold $\M$, under both the Gromov--Hausdorff and Hausdorff
sampling conditions. Our main homotopy equivalence results are presented in
\thmref{gh-hom} and \thmref{h-hom}, respectively. 

This paper is organized in the following manner. \secref{prelim} contains
definitions, notations, and facts that are frequently used throughout the paper.
In \secref{abstract}, the recovery of an abstract Riemannian manifold $\M$ from
a Gromov--Hausdorff close sample $S$ is obtained. We present a novel proof of
Latschev's theorem with a much stronger statement. The sampling parameter
$\Delta(\M)$ as defined in \eqnref{delta}.
\begin{restatable*}[Manifold Reconstruction under Gromov--Hausdorff Distance]
{theorem}{ghhom}\label{thm:gh-hom} Let $(\M,d_\M)$ be a closed, connected
Riemannian manifold. Let $(S,d_S)$ be a compact metric space and $\beta>0$ a
number such that
\[
	\frac{1}{\zeta}d_{GH}(\M,S)<\beta<\frac{1}{1+2\zeta}\Delta(\M)
\] for some $0<\zeta\leq1/14$. Then, $\mod{\Ri_\beta(S)}\simeq\M$.
\end{restatable*}
\secref{euclidean} is devoted to the recovery of a Euclidean submanifold
$\M\subset\R^d$ from a Hausdorff close, Euclidean sample $S\subset\R^d$.
\thmref{h-hom} shows the homotopy equivalence between $\M$ and the
Vietoris--Rips complex $\Ri_\beta(S)$. Here, $\tau(\M)$ denotes the reach
(\defref{reach}) of $\M$.
\begin{restatable*}[Submanifold Reconstruction under Hausdorff Distance]
{theorem} {hhom}\label{thm:h-hom} Let $\M\subset\R^d$ be a closed, connected
Euclidean submanifold. Let $S\subset\R^d$ be a compact subset and $\beta>0$ a
number such that 
\[
	\frac{1}{\zeta}d_{H}(\M,S)<\beta\leq\frac{3(1+2\zeta)(1-14\zeta)}{8(1-2\zeta)^2}\tau(\M)
\] 
for some $0<\zeta<1/14$. Then, $\mod{\Ri_\beta(S)}\simeq\M$.
\end{restatable*}

\section{Preliminaries}\label{sec:prelim} In this section, we present
definitions and notations that we use throughout the paper. The standard results
from algebraic topology and Riemannian geometry are stated here without proof;
details can be found in any standard textbook on the subjects, e.g.,
\cite{MUNK,spanier1994algebraic} and
\cite{berger2007panoramic,BishopRichardL2001Gom}, respectively. 

\subsection{Metric Spaces}
Let $(X,d_X)$ be a metric space. When it is clear from the context, we omit the
metric $d_X$ from the notation, and denote the metric space just by $X$. 
\begin{definition}[Diameter]\label{def:diam} The diameter, denoted
$\diam[X]{Y}$, of a subset $Y\subset X$ is defined by the supremum of the
pairwise distances in $Y$.
\[
	\diam[X]{Y}\eqdef\sup_{y_1,y_2\in Y}d_X(y_1,y_2).
\]
\end{definition}
When $Y$ is compact, its diameter is finite. We denote by $\B_X(x,r)$ the metric
ball of radius $r\geq0$ centered at $x\in X$.

A \emph{correspondence} $\C$ between two (non-empty) metric spaces $(X,d_X)$ and
$(Y,d_Y)$ is defined to be a subset of $X\times Y$ such that
\begin{enumerate}[(a)]
\item for any $x\in X$, there exists $y\in Y$ such that $(x,y)\in\C$, and
\item for any $y\in Y$, there exists $x\in X$ such that $(x,y)\in\C$.
\end{enumerate}
We denote the set of all correspondences between $X,Y$ by $\C(X,Y)$. The
\emph{distortion} of a correspondence $\C\in\C(X,Y)$ is defined as:
\[
\mathrm{dist}(\C)\stackrel{\text{def}}{=}\sup_{(x_1,y_1),(x_2,y_2)\in\C}
\mod{d_X(x_1,x_2)-d_Y(y_1,y_2)}.
\]
\begin{definition}[Gromov--Hausdorff Distance]\label{def:gh} Let $(X,d_X)$ and
$(Y,d_Y)$ be two compact metric spaces. The \emph{Gromov--Hausdorff} distance
between $X$ and $Y$, denoted by $d_{GH}(X,Y)$, is defined as:
\[
	d_{GH}(X,Y)\stackrel{\text{def}}{=}
	\frac{1}{2}\left[\inf_{\C\in\C(X,Y)}\mathrm{dist}(\C)\right].
\]
\end{definition}

\subsection{Simplicial Complexes}
An \emph{abstract simplicial complex}~$\K$ is a collection of finite sets such
that if  $\sigma\in\K$, then so are all its non-empty subsets. In general,
elements of $\K$ are called \emph{simplices} of $\K$. The singleton sets in $\K$
are called the \emph{vertices} of $\K$. If a simplex $\sigma\in\K$ has
cardinality $(m+1)$, then it is called an \emph{$m$-simplex} and is denoted by
$\sigma_m$. An $m$--simplex $\sigma_m$ is also written as
$[v_0,v_1,\ldots,v_m]$, where $v_i$'s belong to the vertex set of $\K$. If
$\sigma'$ is a (proper) subset of $\sigma$, then $\sigma'$ is called a (proper)
\emph{face} of $\sigma$, written as $\sigma'\preceq\sigma$ ($\sigma'\prec\sigma$
when proper).

Let $\K_1$ and $\K_2$ be abstract simplicial complexes with vertex sets $V_1$
and $V_2$, respectively. A \emph{vertex map} is a map between the vertex sets.
Let $\phi \colon V_1 \to V_2$ be a vertex map. We say that $\phi$ induces a
\emph{simplicial map} $\phi:\K_1\to\K_2$ if for all
$\sigma_m=[v_0,v_1,\ldots,v_m]\in K_1$, the image
\[
	\phi(\sigma_m)\eqdef[\phi(v_0),\phi(v_1),\ldots,\phi(v_m)]
\]
is a simplex of
$\K_2$. Two simplicial maps~$\phi,\psi:\K_1\to\K_2$ are called \emph{contiguous}
if for every simplex $\sigma_1\in\K_1$, there exists a simplex $\sigma_2\in\K_2$
such that~$\phi(\sigma_1)\cup\psi(\sigma_1)\preceq\sigma_2$. 

For an abstract simplicial complex $\K$ with vertex set $V$, one can define its
\emph{geometric complex} or \emph{underlying topological space}, denoted by
$\mod\K$, as the space of all functions $h:V\to[0,1]$ satisfying the following
two properties:
\begin{enumerate}[(i)]
	\item $\supp(h)\eqdef\{v\in V\mid h(v)\neq0\}$ is a simplex of
	$\K$, and 
	\item $\sum\limits_{v\in V}h(v)=1$.
\end{enumerate}
For $h\in\mod{\K}$ and vertex $v$ of $\K$, the real number $h(v)$ is called the
$v$--th \emph{barycentric coordinate} of $h$. For a simplex $\sigma$ of $\K$,
its \emph{closed simplex} $\mod{\sigma}$ and \emph{open simplex} $\os{\sigma}$
are subsets of $\mod{\K}$ defined as follows:
\[
	\mod{\sigma}\eqdef\left\{h\in\mod{\K}\mid\supp(h)\subseteq\sigma\right\},
	\text{ and } 
	\os{\sigma}\eqdef\left\{h\in\mod{\K}\mid\supp(h)=\sigma\right\}.
\]
In this work, we use the standard metric topology on $\mod{\K}$, as defined in
\cite{spanier1994algebraic}. A simplicial map $\phi:\K_1\to\K_2$ induces a
continuous (in this topology) map $\mod\phi:\mod{\K_1}\to\mod{\K_2}$ defined by
\[ \mod\phi(h)(v')\eqdef\sum\limits_{\phi(v)=v'}h(v),\text{ for }v'\in\K_2. \]
From the above definition, it follows that
$\mod{\phi}(h)\in\mod{h(\sigma)}$ whenever $h\in\os{\sigma}$.

A simplicial complex $\K$ is called a \emph{pure $m$--complex} if every simplex
of $\K$ is a face of an $m$--simplex. A simplicial complex $\K$ is called a
\emph{flag complex} if $\sigma$ is a simplex of $\K$ whenever every pair of
vertices in $\sigma$ is a simplex of $\K$.

\subsection{Barycentric Subdivision}\label{subsec:subdiv}
The \emph{barycenter}, denoted $\widehat{\sigma}_m$, of an $m$--simplex
$\sigma_m=[v_0,v_1,\ldots,v_m]$ of $\K$ is the point of $\os{\sigma_m}$ such
that $\widehat{\sigma}_m(v_i)=\frac{1}{m+1}$ for all $0\leq i\leq m$. Using
linearity of simplices, a more convenient way of writing this is: 
$$\widehat{\sigma}_m=\sum_{i=0}^m \frac{1}{m+1}v_i.$$ Let $\K$ be a complex. A
\emph{subdivision} of $\K$ is a simplicial complex $\K'$ such that 
\begin{enumerate}[(i)]
    \item the vertices of $\K'$ are points of $\mod{\K}$,
    \item if $s'$ is a simplex of $\K'$, then there is $s\in\K$ such that
        $s'\subset\mod{s}$, and
    \item the linear map $h:\mod{\K'}\to\mod{\K}$ sending each vertex of $\K'$
        to the corresponding point of $\mod{\K}$ is a homeomorphism.
\end{enumerate}
For a simplicial complex $\K$, its \emph{barycentric subdivision}, denoted by
$\sd{\K}$, is a special subdivision defined as follows. The vertices of
$\sd{\K}$ are the barycenters of the simplices of $\K$. The simplices of
$\sd{\K}$ are (non-empty) finite sets
$[\widehat\sigma_0,\widehat\sigma_1,\ldots,\widehat\sigma_m]$ such that
$\sigma_{i-1}\prec\sigma_i$ for $1\leq i\leq m$ and $\sigma_i\in\K$.

\subsection{Riemannian Manifolds}
Let $\M$ be an $n$--dimensional Riemannian manifold, equipped with the shortest
geodesic metric $d_\M$. More formally, for any two points $p,q\in \M$, their
distance is given by
\[
d_\M(x,y)\eqdef\inf\big\{\mathrm{length}(\gamma)\mid\gamma
\text{ is a smooth curve in }\M\text{ joining }p,q\big\}; 
\]
see \cite[p. 174]{berger2007panoramic} for more details. Throughout the paper,
we always assume that $\M$ is connected and closed (without boundary and
compact). A subset $A\subset\M$ is called \emph{(geodesically) convex} if for
any two points $p,q\in A$, there exists a unique minimizing geodesic segment
from $p$ to $q$ whose image lies entirely in $A$. 
\begin{definition}[Convexity Radius]\label{def:conv-rad} The \emph{convexity
radius} of $\M$, denoted $\rho(\M)$, is defined as the infimum of the set of
radii of the largest convex balls across the points of $\M$. Formally,
\[
\rho(\M)=\inf_{p\in\M}\sup\;\{r\geq0\mid\B_\M(p,s)
\text{ is convex for all }0<s<r\}.	
\]
\end{definition}
The compactness of $\M$ guarantees that $\rho(\M)$ is indeed positive;
see \cite[Proposition 95]{berger2007panoramic}. For example, the sphere $\S^n$
of radius $R$ has $\rho(\S^n)=\pi R/2$. The following remarkable result by
Hausmann states that $\M$ is homotopy equivalent to its Vietoris--Rips complex
for a scale smaller than the convexity radius.
\begin{theorem}[Hausmann's Theorem \cite{hausmann_1995}]\label{thm:hausmann} For
any $0<\beta<\rho(\M)$, the geometric complex of $\Ri_\beta(\M)$ is homotopy
equivalent to $\M$.
\end{theorem}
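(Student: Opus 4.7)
The plan is to exhibit the homotopy equivalence through a pair of continuous maps $T \colon \mod{\Ri_\beta(\M)} \to \M$ and $\iota \colon \M \to \mod{\Ri_\beta(\M)}$. Since the vertex set of $\Ri_\beta(\M)$ is all of $\M$, the inclusion $\iota \colon x \mapsto \{x\}$ is immediate. For the map $T$, I would use a Riemannian center-of-mass construction: for $h \in \os{\sigma}$ with $\sigma = \{v_0, \dots, v_m\}$, define $T(h)$ to be the unique minimizer on $\M$ of the weighted energy
\[
E_h(y) \eqdef \sum_{i=0}^{m} h(v_i)\, d_\M(y, v_i)^2.
\]
The hypothesis $\beta < \rho(\M)$ is exactly what is needed: every simplex of $\Ri_\beta(\M)$ has diameter strictly less than $\rho(\M)$, so its vertices lie in a common geodesically convex ball inside which $E_h$ is strictly convex, has a unique minimizer, and that minimizer depends continuously on the weights $h(v_i)$. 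On the nose one has $T \circ \iota = \mathrm{id}_\M$, since the center of mass of a single unit point mass at $x$ is $x$ itself.

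The substantive step is to show $\iota \circ T \simeq \mathrm{id}_{\mod{\Ri_\beta(\M)}}$. The guiding geometric observation is that $T(h)$ lies in the closed geodesic convex hull of $\supp(h)$, and in particular within distance $\beta$ of every vertex of $\supp(h)$; hence $\supp(h) \cup \{T(h)\}$ is again a simplex of $\Ri_\beta(\M)$. This supplies a natural ambient simplex in which the straight-line interpolation $H(h,t) \eqdef (1-t)h + t\,\{T(h)\}$ lives, starting at $h$ when $t=0$ and ending at the vertex $\iota(T(h))$ when $t=1$.

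The main obstacle is the continuity of this interpolation in the metric topology on $\mod{\Ri_\beta(\M)}$: as $h$ varies, both $\supp(h)$ and the extra vertex $T(h)$ may change, and in the Spanier metric topology the vertex $\{u\}$ is not close to the vertex $\{u'\}$ just because $u$ and $u'$ are close in $\M$. To absorb this, I would pass to the barycentric subdivision $\sd{\Ri_\beta(\M)}$ and construct a simplicial map $\phi \colon \sd{\Ri_\beta(\M)} \to \Ri_\beta(\M)$ by sending each barycenter $\bc{\sigma}$ to the vertex $T(\bc{\sigma}) \in \M$. Simpliciality follows from the same diameter bound: for any flag $\sigma_0 \prec \sigma_1 \prec \dots \prec \sigma_k$ in $\Ri_\beta(\M)$, every vertex of $\sigma_k$ lies in a common convex ball of diameter less than $\beta$, hence so do the Karcher means $T(\bc{\sigma_i})$, and these therefore span a simplex of $\Ri_\beta(\M)$.

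The proof then concludes by comparing three maps on $\mod{\sd{\Ri_\beta(\M)}} \cong \mod{\Ri_\beta(\M)}$: the canonical identification with $\mod{\Ri_\beta(\M)}$, the realization $\mod{\phi}$, and the composite $\iota \circ T$ restricted through the subdivision. Any two of these land, on each simplex of $\sd{\Ri_\beta(\M)}$, inside a common simplex of $\Ri_\beta(\M)$ (again by the convex-ball argument), so they are pairwise contiguous in the sense of \secref{prelim} and thus induce homotopic continuous maps on geometric realizations. Chaining these homotopies yields $\iota \circ T \simeq \mathrm{id}_{\mod{\Ri_\beta(\M)}}$, which together with $T \circ \iota = \mathrm{id}_\M$ establishes the desired homotopy equivalence $\mod{\Ri_\beta(\M)} \simeq \M$.
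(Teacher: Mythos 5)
The paper states Hausmann's theorem as a cited result (\cite{hausmann_1995}) and does not reprove it, so there is no in-paper proof to compare against. Your Riemannian center-of-mass construction is a genuine departure from Hausmann's original argument, which fixes a total order on $\M$ and builds the map $T$ skeleton-by-skeleton via iterated geodesic interpolation; the symmetric Karcher-mean approach is closer in spirit to the later metric-thickening treatments of Adams et al.\ \cite{Adamaszek2018,Adams_2019}.

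That said, there is a real gap in your simpliciality step. You justify it by asserting that for a flag $\sigma_0\prec\cdots\prec\sigma_k$ in $\Ri_\beta(\M)$, ``every vertex of $\sigma_k$ lies in a common convex ball of diameter less than $\beta$.'' This is false already in $\R^2$: an equilateral triangle of side length just below $\beta$ has circumradius roughly $\beta/\sqrt{3}>\beta/2$, so no ball of diameter less than $\beta$ contains its vertices. What the argument actually requires is that the geodesic convex hull of $\sigma_k$ has diameter less than $\beta$; since the Karcher means $T(\bc{\sigma_i})$ all lie in that convex hull, they would then be pairwise less than $\beta$ apart and span a simplex. But this convex-hull diameter bound is a statement about (quasi-)convexity of the distance functions $d_\M(\cdot,p)$ on small balls --- exactly Hausmann's condition (b), as alluded to in the paper's remark following Theorem~\ref{thm:hausmann} --- and it does \emph{not} follow from geodesic convexity of balls alone as in \defref{conv-rad}. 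The same unstated hypothesis already lurks behind your earlier claim that $T(h)$ is within $\beta$ of every vertex of $\supp(h)$, and again in your contiguity check at the end. If you state and justify the convex-hull diameter estimate (either via Hausmann's quasi-convexity axiom or an explicit curvature bound), the center-of-mass route closes cleanly through \propref{homotopy}; as written, the central geometric inequality rests on a false step.
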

\begin{remark}
We remark that Hausmann defines the quantity $\rho(\M)$ (denoted $r(\M)$ by
Hausmann) slightly differently; see conditions (a)--(c) in \cite[Section
3]{hausmann_1995}. Nonetheless, the veracity of Hausmann's original result is
not compromised by the current substitution, since the implications from these
conditions used by Hausmann are still obtained using the current definition of
convexity radius. 	
\end{remark}

The definition of sectional curvatures of an abstract manifold $\M$ uses
involved machinery from Riemannian geometry. We skip the definition here,
suggesting the interested reader to call upon any graduate level textbook on the
subject, e.g., \cite[Chapter 9]{BishopRichardL2001Gom}. For a point $p\in\M$ and
(unit norm) vectors $u,v\in T_p(\M)$ the tangent space of $\M$, the sectional
curvature at $p$ along the plane spanned by $u,v$ is denoted by $\kappa_p(u,v)$.
It measures the Gaussian curvature at $p$ if $\M$ is a Euclidean
surface. The (embedded) sphere $\S^n$ of radius $R$ has a constant sectional
curvature of $1/R^2$. 

Let $\kappa(\M)\in\R$ denote the supremum of the set of sectional curvatures
$\kappa_p(u,v)$ across all $u,v$ and all $p$. Since $\M$ is compact, it can be
shown that $\kappa(\M)$ is finite; see \cite[p. 166]{BishopRichardL2001Gom} for
example. For the sake of simplifying the statements of our results, we
introduce:  
\begin{equation}\label{eqn:delta}
	\Delta(\M)=\begin{cases}
		\rho(\M),&\text{ if }\kappa(\M)\leq0 \\
		\min\left\{\rho(\M), \frac{\pi}{4\sqrt{\kappa(\M)}}\right\},&\text{ if }
		\kappa(\M)>0.
	\end{cases}
\end{equation}
The quantity $\frac{1}{\Delta(\M)}$ can be called the \emph{condition number} of
$\M$. The justification behind the name is that a manifold with a small
condition number is well-conditioned to be reconstructed; whereas, the recovery
of a manifold with a large condition number would require a large and extremely
dense sample.

\section{Abstract Manifold Reconstruction}\label{sec:abstract} Recall that $\M$ 
denotes an $n$--dimensional Riemannian manifold.
This section is devoted to the study of the Vietoris--Rips complexes of a metric
space $(S,d_S)$ that is close to $\M$ in the Gromov--Hausdorff distance (see
\defref{gh}). Our main homotopy equivalence result of this section is presented
in \thmref{gh-hom}. The proof of the result uses Jung's theorem as a very
important ingredient. We first discuss the classical Jung's theorem but in the
context of Riemannian manifolds.

\subsection{Jung's Theorem in Riemannian Manifolds}
For a compact subset $A\subset\M$, its diameter satisfies $\diam{A}<\infty$.
We define its \emph{circumradius} in $M$ to be
\[
\mathfrak{R}(A)\coloneqq \inf_{m\in \M}\max_{a\in A}d_\M(a,m).
\]
Intuitively, the circumradius is the radius of a smallest closed metric ball in
$M$ that contains $A$. The reader is warned that these terms may not exactly
match their usual meaning in plane geometry.

A point $m\in\M$ satisfying $\max_{a\in A}d_\M(a ,m)=\mathfrak{R}(A)$ is called
a \emph{circumcenter} of $A$, and is denoted by $\Theta(A)$. For $A$ compact, a
circumcenter may not uniquely exist. 

When $\M=\R^n$, however, the circumcenter exists uniquely. Moreover,
for a compact Euclidean subset $A\subset\R^d$, the classical Jung's theorem
\cite[Theorem 2.6]{danzer1963helly} states that
$\mathfrak{R}(A)\leq\sqrt{\frac{n}{2(n+1)}}\diam{A}$.

The result was further extended by Dekster in
\cite{Dekster1985AnEO,Dekster1995TheJT,Dekster1997}---first for compact subsets
of Riemannian manifolds with constant sectional curvatures, then for Alexadrov
spaces of curvature bounded above. A corollary in \cite[Section 2]{Dekster1997}
affirms that $\Theta(A)$ exists (possibly non-uniquely) for a compact $A$ if:
\begin{enumerate}[(i)]
\item $A$ is contained in the interior of a compact convex domain
$C^n\subset\M$, and
\item $\diam{C^n}<2\pi/(3\sqrt{\kappa(\M)})$ when $\kappa(\M)>0$.
\end{enumerate}
\begin{remark}
If $\diam{A}<\Delta(\M)$, we note that both the above conditions are satisfied.
In particular, one can choose $C^n$ to be the closed ball $\overline{\B(a,r)}$
for any $a\in A$ and $r$ with $\diam{A}<r<\Delta(\M)$. Here $\Delta(\M)$ is as
defined in \eqnref{delta}. 
\end{remark}\label{rem:jung}
Moreover, $\Theta(A)$ belongs to the interior of $C^n$, and we have the
following bound on the circumradius of $A$. 
\begin{theorem}[Extented Jung's Theorem \cite{Dekster1997}]\label{thm:Jung} Let
$\M$ be a compact, connected, $n$--dimensional manifold with the sectional
curvatures at each point bounded above by $\kappa\in\R$. For any compact
$A\subset\M$ with $\diam{A}<\Delta(\M)$, its circumcenter $\Theta(A)$ exists in
$\M$. Moreover, its diameter
\begin{equation}\label{eqn:Jung}
\diam{A}\geq\begin{cases}
\frac{2}{\sqrt{-\kappa}}\sinh^{-1}\left(\sqrt{\frac{n+1}{2n}}
\sinh\left(\sqrt{-\kappa}\;\mathfrak{R}(A)\right)\right),&\text{ for }\kappa<0\\
2\mathfrak{R}(A)\sqrt{\frac{n+1}{2n}},&\text{ for }\kappa=0 \\
\frac{2}{\sqrt{\kappa}}\sin^{-1}\left(\sqrt{\frac{n+1}{2n}}
\sin\left(\sqrt{\kappa}\;\mathfrak{R}(A)\right)\right),
&\text{ for }\kappa>0\text{ and}\\&\mathfrak{R}(A)\in
\left[0,\frac{\pi}{2\sqrt{\kappa}}\right]
\end{cases}
\end{equation}
\end{theorem}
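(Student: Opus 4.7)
The plan is to reduce the statement to Dekster's original theorem for spaces of curvature bounded above by first verifying, under our hypothesis $\diam{A}<\Delta(\M)$, that the hypotheses of his existence corollary (recalled in the paragraph preceding the theorem) are satisfied, and then appealing to the classical Jung inequality on the model space of constant curvature for the diameter bound.

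For existence of $\Theta(A)$, I would follow the recipe of \remref{jung}: fix any $a\in A$ and pick $r$ with $\diam{A}<r<\Delta(\M)$, and set $C^n\eqdef\overline{\B_\M(a,r)}$. Since $r<\Delta(\M)\leq\rho(\M)$, the closed ball $C^n$ is a compact convex domain of $\M$. Every $a'\in A$ satisfies $d_\M(a,a')\leq\diam{A}<r$, so $A\subset\mathrm{int}(C^n)$. When $\kappa(\M)>0$ we further have
\[
\diam{C^n}\leq 2r<2\Delta(\M)\leq \frac{\pi}{2\sqrt{\kappa(\M)}}<\frac{2\pi}{3\sqrt{\kappa(\M)}},
\]
so conditions (i) and (ii) of Dekster's corollary are met, yielding a circumcenter $\Theta(A)\in\mathrm{int}(C^n)\subset\M$.

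Once existence is in hand, the inequality \eqnref{Jung} is precisely Dekster's extension of Jung's theorem to manifolds of curvature bounded above by $\kappa$. The standard derivation proceeds by comparison geometry: since $\mathfrak{R}(A)\leq\diam{C^n}/2<\Delta(\M)$ stays within the convexity-radius threshold, the geodesic triangles determined by points of $A\cup\{\Theta(A)\}$ admit Rauch/Toponogov comparison triangles in the simply connected $n$--dimensional space form of constant sectional curvature $\kappa=\kappa(\M)$. The classical Jung inequality on that model space, whose extremal configuration is a regular $n$--simplex inscribed in a ball, then transfers back to $\M$ and produces the three branches of \eqnref{Jung}.

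The main obstacle is not existence---which becomes transparent once the definition of $\Delta(\M)$ is unpacked---but the comparison step for the bound itself: one must ensure the comparison triangles lie inside a convex region where the hinge inequalities are sharp, and that the extremal Jung configuration is actually realized on the model space. This is the delicate technical content of \cite{Dekster1997}, and I would cite his theorem rather than reprove it.
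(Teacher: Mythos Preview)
Your approach matches the paper's treatment exactly: the paper does not prove \thmref{Jung} at all but simply states it as a result from \cite{Dekster1997}, with the preceding \remref{jung} verifying that $\diam{A}<\Delta(\M)$ forces Dekster's hypotheses (i)--(ii) to hold via the convex ball $C^n=\overline{\B_\M(a,r)}$. Your write-up is a faithful expansion of that same reduction, and your decision to cite Dekster for the comparison-geometry step rather than reprove it is precisely what the paper does.
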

Utilizing the above result, we show the following key result bounding the
circumradius. See Appendix for a proof.
\begin{proposition}[Circumradius]\label{prop:circum} For any compact
$A\subset\M$ with $\diam{A}<\Delta(\M)$, its diameter satisfies
\[
	\diam{A}\geq\frac{4}{3}\mathfrak{R}(A).
\]
\end{proposition}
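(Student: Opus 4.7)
The plan is to treat each of the three curvature regimes separately, in each case feeding the circumradius--diameter inequality from the extended Jung's theorem (\thmref{Jung}) into an elementary scalar inequality. A common ingredient is the purely dimensional bound $\sqrt{(n+1)/(2n)}\geq 1/\sqrt{2}$, valid for every $n\geq 1$, so I will write $c\eqdef\sqrt{(n+1)/(2n)}$ throughout. Writing $R\eqdef\mathfrak{R}(A)$, the target inequality $\diam{A}\geq (4/3)R$ will follow once I show, in each case, that the Jung lower bound on $\diam{A}$ dominates $(4/3)R$.

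The $\kappa=0$ case is essentially immediate, since Jung gives $\diam{A}\geq 2cR\geq \sqrt{2}\,R$ and $\sqrt{2}>4/3$. For $\kappa<0$, I set $y=\sqrt{-\kappa}\,R$ and observe that the inequality $\diam{A}\geq (4/3)R$ reduces, via monotonicity of $\sinh^{-1}$, to the scalar inequality
\[
c\sinh(y)\;\geq\;\sinh\!\bigl(\tfrac{2}{3}y\bigr)\qquad (y\geq 0).
\]
I plan to prove this by showing that the ratio $h(y)=\sinh(2y/3)/\sinh(y)$ is decreasing on $[0,\infty)$, so that $h(y)\leq \lim_{y\to 0^+}h(y)=2/3$; since $c\geq 1/\sqrt{2}>2/3$, the claim follows. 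Monotonicity of $h$ reduces to the logarithmic-derivative inequality $\tfrac{2}{3}\coth(\tfrac{2}{3}y)\leq \coth(y)$, which I can check by standard arguments (e.g.\ by studying $\phi(a)=a\coth(ay)$ for $a\in(0,1]$).

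For $\kappa>0$ I use the constraint $\diam{A}<\Delta(\M)\leq \pi/(4\sqrt{\kappa})$ together with $R\leq \diam{A}$ to force $x\eqdef\sqrt{\kappa}\,R\in[0,\pi/4)$. The desired inequality then reduces, after applying $\sin^{-1}$, to
\[
c\sin(x)\;\geq\;\sin\!\bigl(\tfrac{2}{3}x\bigr)\qquad\bigl(x\in[0,\pi/4]\bigr).
\]
Here the situation is reversed from the hyperbolic case: the ratio $g(x)=\sin(2x/3)/\sin(x)$ is increasing on $(0,\pi/4]$, so its maximum on this interval is attained at $x=\pi/4$, where $g(\pi/4)=\sin(\pi/6)/\sin(\pi/4)=1/\sqrt{2}$. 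Since $c\geq 1/\sqrt{2}\geq g(x)$, the inequality follows. Monotonicity of $g$ again reduces to a $\cot$--type inequality that I can verify directly.

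The main obstacle is the monotonicity analysis of $h$ and $g$ on the relevant ranges; this is the only nontrivial analytic content, and the choice of the coefficient $\pi/(4\sqrt{\kappa})$ inside $\Delta(\M)$ is precisely what makes the endpoint value in the positive-curvature case match the dimensional constant $1/\sqrt{2}$. Once those monotonicity facts are in hand, the three cases glue together directly to yield $\diam{A}\geq (4/3)\mathfrak{R}(A)$.
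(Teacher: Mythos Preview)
Your argument is correct and follows the same overall strategy as the paper: split on the sign of $\kappa$, and for $\kappa>0$ reduce via $R\leq\diam{A}<\Delta(\M)\leq\pi/(4\sqrt{\kappa})$ to a monotonicity check whose endpoint value at $x=\pi/4$ gives exactly $\sin(\pi/6)/\sin(\pi/4)=1/\sqrt{2}$, matching the dimensional constant. Two differences are worth noting. First, your separate $\sinh$ analysis for $\kappa<0$ is unnecessary: since \thmref{Jung} only requires an \emph{upper} bound on the sectional curvatures, when $\kappa(\M)\leq 0$ you may simply take $\kappa=0$ and invoke the Euclidean formula $\diam{A}\geq 2cR\geq\sqrt{2}\,R>\tfrac{4}{3}R$ in one line, which is what the paper does. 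Second, for $\kappa>0$ the paper shows that $J(r)/r$ with $J(r)=\tfrac{2}{\sqrt{\kappa}}\sin^{-1}\!\big(c\sin(\sqrt{\kappa}\,r)\big)$ is decreasing (via $rJ''(r)<0$), whereas you show that $g(x)=\sin(2x/3)/\sin(x)$ is increasing (via the $\cot$ inequality); these are two sides of the same coin, and your logarithmic-derivative route is arguably the cleaner of the two.
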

We work with the constant $\frac{4}{3}$ in the paper for convenience.
It can be improved, however, when an upper bound on the sectional curvature is known
(such as for some Lie groups). 

We now note the following important proposition.
\begin{proposition}[Circumcenters of Subsets]\label{prop:rad} If $A$ is a
compact subset of $\M$ with $\diam{A}<\Delta(\M)$, then for any non-empty subset
$B\subseteq A$, we have
\[d_\M\left(\Theta(B),\Theta(A)\right) \leq\frac{3}{4}\diam{A}.\]
\end{proposition}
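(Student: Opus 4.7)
The plan is to apply the extended Jung's theorem (\thmref{Jung}) to the subset $B$ within a compact convex ball centered at $\Theta(A)$---rather than at a point of $B$ as suggested in \remref{jung}. The key observation is that, by \propref{circum}, every $b \in B \subseteq A$ satisfies $d_\M(b, \Theta(A)) \leq \mathfrak{R}(A) \leq \tfrac{3}{4}\diam{A}$, so $B$ already lies deep inside a small metric ball around $\Theta(A)$, and I can use this ball as the Jung domain for $B$.

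Concretely, for any $r$ in the non-empty interval $\bigl(\mathfrak{R}(A), \Delta(\M)\bigr)$, I set $C^n \eqdef \overline{\B_\M(\Theta(A), r)}$ and verify the hypotheses of \thmref{Jung} for the set $B$: (i) $C^n$ is compact and geodesically convex since $r < \Delta(\M) \leq \rho(\M)$; (ii) $B$ lies in the interior $\B_\M(\Theta(A),r)$ because $d_\M(b, \Theta(A)) \leq \mathfrak{R}(A) < r$ for every $b \in B$; and (iii) when $\kappa(\M) > 0$, $\diam{C^n} \leq 2r < 2\Delta(\M) \leq \pi/(2\sqrt{\kappa(\M)}) < 2\pi/(3\sqrt{\kappa(\M)})$, satisfying the curvature--diameter constraint.

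The ``interior'' conclusion of \thmref{Jung} then forces any circumcenter $\Theta(B)$ of $B$ (which is in general non-unique) to lie in the interior of $C^n$, i.e., $d_\M(\Theta(B), \Theta(A)) < r$. Since this holds for every admissible $r > \mathfrak{R}(A)$, letting $r \searrow \mathfrak{R}(A)$ yields
\[
    d_\M(\Theta(B),\, \Theta(A)) \;\leq\; \mathfrak{R}(A) \;\leq\; \tfrac{3}{4}\diam{A},
\]
which is the desired inequality.

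The main subtlety---and essentially the whole point of the argument---is the choice of $C^n$ centered at $\Theta(A)$. A direct triangle-inequality yields only $d_\M(\Theta(A), \Theta(B)) \leq \mathfrak{R}(A) + \mathfrak{R}(B) \leq \tfrac{3}{2}\diam{A}$, which is off by a factor of two. Using \propref{circum} to control $\mathfrak{R}(A)$ is precisely what makes the $\Theta(A)$-centered ball simultaneously small enough to contain $B$ in its interior and of radius less than $\Delta(\M)$ to qualify as a Jung domain; the factor $\tfrac{1}{4}$ baked into the definition of $\Delta(\M)$ is exactly tuned so that the curvature bound $\diam{C^n} < \tfrac{2\pi}{3\sqrt{\kappa(\M)}}$ goes through without slack.
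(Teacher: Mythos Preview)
Your proposal is correct and follows essentially the same approach as the paper: both arguments choose the Jung domain $C^n$ to be a closed ball centered at $\Theta(A)$ of radius slightly larger than $\mathfrak{R}(A)$ (the paper writes it as $\mathfrak{R}(A)+\alpha$ with $0<\alpha<\Delta(\M)/4$, you take $r\in(\mathfrak{R}(A),\Delta(\M))$), invoke the interior-circumcenter conclusion of \thmref{Jung} for $B$, and then let the radius shrink to $\mathfrak{R}(A)\leq\tfrac{3}{4}\diam{A}$. Your verification of the curvature--diameter hypothesis $\diam{C^n}<2\pi/(3\sqrt{\kappa(\M)})$ is in fact more explicit than the paper's, which leaves this step to the reader.
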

\begin{proof}
If follows from \thmref{Jung} that $\Theta(A)$ and $\Theta(B)$ exist. The
definition of $\mathfrak{R}(A)$ then implies that
$A\subseteq\overline{\B_\M(\Theta(A),\mathfrak{R}(A))}$, the closed metric ball
of radius $\mathfrak{R}(A)$ centered at $\Theta(A)$. From \propref{circum}
again, we have 
\[
\mathfrak{R}(A)\leq\frac{3}{4}\diam{A}<\frac{3}{4}\Delta(\M).
\]
For any arbitrary $\alpha$ with $0<\alpha<\Delta(\M)/4$, the convex domain $C^n$
in \remref{jung} can then be chosen to be the closed ball
$\overline{\B_\M(\Theta(A),\mathfrak{R}(A)+\alpha)}$. Since $A\subseteq B$, $B$
lies in the interior of $C^n$, so does $\Theta(B)$. Therefore,
\[
d_\M(\Theta(B),\Theta(A))<\mathfrak{R}(A)+\alpha\leq\frac{3}{4}\diam{A}+\alpha.
\]
Since $\alpha$ is arbitrary, the result follows.
\end{proof}

\subsection{Homotopy Equivalence}
We now assume that $(S,d_S)$ is a compact metric space such that the
Gromov--Hausdorff distance $d_{GH}(S,\M)<\zeta\beta$ for some $\beta>0$ and
$0<\zeta<1/2$. From the definition of the Gromov--Hausdorff distance
(\defref{gh}), then there exists a correspondence $\C\in\C(\M,S)$ with
$\mathrm{dist}(\C)<2\zeta\beta$. The correspondence induces a (possibly
non-continuous and non-unique) vertex map $\phi:\M\to S$ such that
$(p,\phi(p))\in\C$ for all $p\in\M$. The vertex map $\phi$ extends to a
simplicial map $\phi$:
\begin{equation}\label{eqn:phi}
\Ri_{(1-2\zeta)\beta}(\M)\xrightarrow{\quad\phi\quad}\Ri_\beta(S).
\end{equation}
To see that $\phi$ is a simplicial map, take an $l$--simplex
$\sigma_l=[p_0,p_1,\ldots,p_l]$ in $\Ri_{(1-2\zeta)\beta}(\M)$. By the
construction of the Vietoris--Rips complex, we must have
$d_\M(p_i,p_j)<(1-2\zeta)\beta$ for any $0\leq i,j\leq l$. Since
$\mathrm{dist}(\C)<2\zeta\beta$ and $(p,\phi(p))\in\C$ for all $p\in\M$, we
have
\[
d_S(\phi(p_i),\phi(p_j))\leq d_\M(p_i,p_j)+2\zeta\beta
<(1-2\zeta)\beta+2\zeta\beta=\beta.
\]
So, the image $\phi(\sigma_l)=[\phi(p_0),\phi(p_1),\ldots,\phi(p_l)]$ is a
simplex of $\Ri_\beta(S)$.

In the rest of the section, we show that $\phi$ induces a homotopy equivalence
on the respective geometric complexes. First, we show in the following lemma
that the simplicial map induces a surjective homomorphism on all homotopy
groups.
\begin{lemma}[Surjectivity]\label{lem:gh-sur} Let $(S,d_S)$ be a compact metric
space and $\beta>0$ a number such that  
\[
	\frac{1}{\zeta}d_{GH}(\M,S)<\beta<\frac{1}{1+2\zeta}\Delta(\M)
\]
for some $0<\zeta\leq1/14$. Then for any $m\geq0$, the simplicial map
$\phi:\Ri_{(1-2\zeta)\beta}(\M)\to\Ri_\beta(S)$ (as defined in \eqnref{phi})
induces a surjective homomorphism on the $m$--th homotopy group.
\end{lemma}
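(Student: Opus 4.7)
My plan is, for an arbitrary class $[\alpha] \in \pi_m(\mod{\Ri_\beta(S)})$ represented by a continuous map $\alpha \colon S^m \to \mod{\Ri_\beta(S)}$, to exhibit an explicit preimage under $\phi_\ast$. First I would invoke the simplicial approximation theorem to replace $\alpha$ (up to homotopy) by a simplicial map $f \colon K \to \Ri_\beta(S)$ from a sufficiently fine triangulation $K$ of $S^m$. Using the correspondence $\C$ that witnesses the Gromov--Hausdorff bound, I fix a ``reverse'' vertex map $\psi \colon S \to \M$ with $(\psi(s), s) \in \C$ for all $s \in S$; it distorts distances by at most $2\zeta\beta$.

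The central combinatorial construction is a simplicial lift $g \colon \sd{K} \to \Ri_{(1-2\zeta)\beta}(\M)$, defined on each vertex of $\sd{K}$ (a barycenter $\bc{\sigma}$ for $\sigma \in K$) by
\[
g(\bc{\sigma}) \eqdef \Theta\bigl(\psi(f(\sigma))\bigr),
\]
the circumcenter in $\M$ of the $\psi$-image of the simplex $f(\sigma) \subset S$. The circumcenter exists by \thmref{Jung}, since $\diam[\M]{\psi(f(\sigma))} \leq \diam[S]{f(\sigma)} + 2\zeta\beta < (1+2\zeta)\beta < \Delta(\M)$. To verify that $g$ extends to a simplicial map into $\Ri_{(1-2\zeta)\beta}(\M)$, take any simplex $[\bc{\sigma}_0, \ldots, \bc{\sigma}_k]$ of $\sd{K}$ with $\sigma_0 \prec \cdots \prec \sigma_k$. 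Since $\psi(f(\sigma_i)) \subseteq \psi(f(\sigma_j))$ whenever $i \leq j$, \propref{rad} yields
\[
d_\M\bigl(g(\bc{\sigma}_i), g(\bc{\sigma}_j)\bigr) \;\leq\; \tfrac{3}{4}\,\diam[\M]{\psi(f(\sigma_{\max(i,j)}))} \;<\; \tfrac{3}{4}(1+2\zeta)\beta.
\]
The hypothesis $\zeta \leq 1/14$ is exactly what forces $\tfrac{3}{4}(1+2\zeta) \leq 1-2\zeta$, so the image vertices span a simplex of $\Ri_{(1-2\zeta)\beta}(\M)$.

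What remains---and this is the main obstacle---is to argue that $\mod{\phi \circ g} \simeq \mod{f}$ as maps $\mod{\sd{K}} \cong \mod{K} \to \mod{\Ri_\beta(S)}$. My plan is a simplex-by-simplex carrier argument: for each simplex $\tau = [\bc{\sigma}_0, \ldots, \bc{\sigma}_k]$ of $\sd{K}$, show that the combined vertex set $f(\sigma_k) \cup \{\phi(g(\bc{\sigma}_i))\}_{i=0}^{k}$ has $S$-diameter strictly less than $\beta$, so it spans a single simplex of $\Ri_\beta(S)$ containing both $\mod{f}(\mod{\tau})$ and $\mod{\phi g}(\mod{\tau})$; one then joins the two restrictions by the straight-line homotopy inside that common carrier. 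The three kinds of pairwise distances all reduce, via the $2\zeta\beta$ distortion, to distances in $\M$ controlled by \propref{rad} and the circumradius bound of \propref{circum} applied to $\psi(f(\sigma_k))$. The hardest pairings are the cross-distances $d_S\bigl(s, \phi(g(\bc{\sigma}_j))\bigr)$ for $s \in f(\sigma_k)$ with $j < k$: these require chaining the circumradius of $\psi(f(\sigma_k))$ (to move from $\psi(s)$ to $\Theta(\psi(f(\sigma_k)))$) with the \propref{rad} comparison between $\Theta(\psi(f(\sigma_k)))$ and $\Theta(\psi(f(\sigma_j)))$, and the delicate interplay between these estimates and the two applications of distortion is exactly where the numerical threshold $\zeta \leq 1/14$ is exhausted. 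If the direct carrier bound is too tight, a fallback is to pass to a further iterated subdivision $\sd^N K$ so that the mesh of $\mod{f}$ inside $\mod{\Ri_\beta(S)}$ shrinks, leaving extra room in the carrier. Once the homotopy is in hand, $[\alpha] = \mod{\phi}_\ast([\mod{g}])$ establishes surjectivity of $\phi_\ast$ on $\pi_m$.
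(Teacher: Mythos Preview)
Your construction of the lift $g\colon\sd{K}\to\Ri_{(1-2\zeta)\beta}(\M)$ via circumcenters is exactly the paper's, and your verification that $g$ is simplicial is correct. The gap is in the homotopy step.

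The single-carrier straight-line homotopy you propose does not close. Concretely, take a simplex $\tau=[\bc{\sigma}_0,\ldots,\bc{\sigma}_k]$ of $\sd{K}$ and a vertex $v\in\sigma_k\setminus\sigma_j$ for some $j<k$. You need $d_S\bigl(f(v),\phi(g(\bc{\sigma}_j))\bigr)<\beta$. Passing through $\psi$ and the correspondence, this reduces to bounding $d_\M\bigl(\psi(f(v)),\Theta(\psi(f(\sigma_j)))\bigr)$; but $\psi(f(v))\notin\psi(f(\sigma_j))$, so neither the circumradius of $\psi(f(\sigma_j))$ nor \propref{rad} applies directly. The best you get (routing through $\Theta(\psi(f(\sigma_k)))$) is roughly $2\mathfrak{R}(\psi(f(\sigma_k)))\leq\tfrac{3}{2}(1+2\zeta)\beta$, and after adding the $2\zeta\beta$ distortion this exceeds $\beta$ for every $\zeta>0$. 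Your fallback of iterated subdivision does not help: subdividing $K$ does not shrink the diameters of the target simplices $f(\sigma)\subset S$, which remain just below $\beta$, so the same cross-distance obstruction persists.

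The paper circumvents this via \propref{homotopy}: instead of a single straight-line homotopy, it interpolates in $m$ stages, converting one barycentric coordinate at a time from the $g$-side to the $f\circ h$-side. At stage $i$ the carrier is $f(\sigma_i)\cup g([\bc{\sigma}_i,\ldots,\bc{\sigma}_m])$, and every cross-pair $(f(v),g(\bc{\sigma}_j))$ that arises has $v\in\sigma_i\subseteq\sigma_j$, so only the ``easy'' estimate $d_\M(\psi(f(v)),\Theta(\psi(f(\sigma_j))))\leq\mathfrak{R}(\psi(f(\sigma_j)))\leq\tfrac{3}{4}(1+2\zeta)\beta$ is ever needed. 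This is precisely condition~(b) of \propref{homotopy}, and it is exactly where $\zeta\leq 1/14$ becomes tight. The missing ingredient in your argument is this staged homotopy; once you invoke \propref{homotopy} in place of the global carrier, your proof goes through.
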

\begin{proof}
As observed in \propref{path-connected}, both $\Ri_{(1-2\zeta)\beta}(\M)$ and
$\Ri_\beta(S)$ are path-connected. So, the result holds for $m=0$. 
	
For $m\geq1$, let us take an abstract simplicial complex $\K$ such that
$\mod{\K}$ is a triangulation of the $m$-dimensional sphere $\S^m$. Note that
$\K$ is a pure $m$--complex. In order to show surjectivity of $\mod{\phi}_*$ on
$\pi_m\left(\mod{\Ri_{(1-2\zeta)\beta}(\M)}\right)$, we start with a simplicial
map $g:\K\map\Ri_{\beta}(S)$, and argue that there must exist a simplicial map
$\widetilde{g}:\sd{\K}\map\Ri_{(1-2\zeta)\beta}(\M)$ such that the following
diagram commutes up to homotopy:
\begin{equation}\label{eqn:diag}
\begin{tikzpicture} [baseline=(current  bounding  box.center)]
	\node (k1) at (-2,0) {$\mod{\Ri_{(1-2\zeta)\beta}(\M)}$};
	\node (k2) at (2,0) {$\mod{\Ri_\beta(S)}$};
	\node (k3) at (2,-2) {$\mod{\K}$};
	\node (k4) at (-2,-2) {$\mod{\sd{\K}}$};
	\draw[map] (k1) to node[auto] {$\mod{\phi}$} (k2);
	\draw[map,swap] (k3) to node[auto,swap] {$\mod{g}$} (k2);
	\draw[map,dashed] (k4) to node[auto,swap] {$\mod{\widetilde{g}}$} (k1);
	\draw[map, <-, dashed] (k4) to node[auto] {$h^{-1}$} (k3);
\end{tikzpicture}
\end{equation}
where the linear homeomorphism $h:\mod{\sd{\K}}\map\mod{\K}$ maps each vertex of
$\sd{\K}$ to the corresponding point of $\mod{\K}$ as discussed in
\subsecref{subdiv}. We remark here that it suffices to consider an arbitrary
simplicial map $g$ due to the simplicial approximation theorem \cite[Theorem
16.5]{MUNK}.

We note that each vertex of $\sd{\K}$ is the barycenter, $\widehat{\sigma}$, of
a simplex $\sigma$ of $\K$. In order to construct the simplicial map
$\widetilde{g}:\sd{\K}\map\Ri_{(1-2\zeta)\beta}(\M)$, we define it on the
vertices of $\sd{\K}$ first, and prove that the vertex map extends to a
simplicial map. 

Let $\sigma_l=[v_0,v_1,\ldots,v_l]$ be an $l$--simplex of $\K$. Since $g$ is a
simplicial map, then the image $g(\sigma_l)=[g(v_0),g(v_1),\ldots,g(v_l)]$ is a
simplex of $\Ri_\beta(S)$, hence a subset of $S$ with
$\diam[S]{g(\sigma_l)}<\beta$. For each $0\leq j\leq l$, there exists $p_j\in\M$
such that $(p_j,g(v_j))\in\C$. Recall that $\C$ is a correspondence with
distortion $\mathrm{dist}(\C)<2\zeta\beta$ as already fixed right above
\eqnref{phi}. We denote $\sigma_j':=[p_0,p_1,\ldots,p_j]$ for $0\leq j\leq l$.
We note for later that the diameter of $\sigma_l'$ is less than $\Delta(\M)$: 
\begin{equation}\label{eqn:sigma}
\diam{\sigma_j'}\leq\diam[S]{g(\sigma_j)}+2\zeta\beta
<\beta+2\zeta\beta=(1+2\zeta)\beta<\Delta(\M).
\end{equation}
We then define the vertex map
\[
	\widetilde{g}(\widehat{\sigma}_l)\eqdef\Theta(\sigma_l'),
\]
where $\Theta(\sigma_l')\in\M$ is a circumcenter of $\sigma_l'$. Due to the
diameter bound in \eqnref{sigma}, \thmref{Jung} implies that a circumcenter of
$\sigma_l'$ exists. 

To see that $\widetilde{g}$ extends to a simplicial map, we consider a typical
$l$--simplex $\tau_l=[\widehat{\sigma}_0,\ldots,\widehat{\sigma}_l]$, of
$\sd{\K}$, where $\sigma_{i-1}\prec\sigma_i$ for $1\leq i\leq l$ and
$\sigma_i\in\K$. Now,
\begin{align*}
	\diam{\widetilde{g}(\tau_l)}
	&=\diam{[\Theta(\sigma_0'),\Theta(\sigma_1'),\ldots,\Theta(\sigma_l')]} \\
	&=\max_{0\leq i<j\leq l}\{d_\M(\Theta(\sigma_i'),\Theta(\sigma_j'))\} \\
	&\leq\max_{0\leq j\leq l}\left\{ \left(\frac{3}{4}\right)\diam{\sigma_j'}
	\right\}, \\
	&\quad\quad\quad\text{ by \propref{rad} as }\diam{\sigma_j'}<\Delta(\M) \\
	&=\frac{3}{4}\diam{\sigma_l'} \\
	&<\frac{3}{4}(1+2\zeta)\beta,\text{ from \eqnref{sigma}} \\
	&=(1-2\zeta)\beta-(1-14\zeta)\beta/4 \\
	&\leq(1-2\zeta)\beta,\text{ since }\zeta\leq1/14.
\end{align*}
So, $\widetilde{g}(\tau_l)$ is a simplex of $\Ri_{(1-2\zeta)\beta}(\M)$. This
implies that $\widetilde{g}$ is a simplicial map. 
	
We lastly invoke \propref{homotopy} to show that the diagram commutes up to
homotopy. We need to argue that the simplicial maps $g$ and
$(\phi\circ\widetilde{g})$ satisfy the conditions of \propref{homotopy}:
\begin{enumerate}[(a)]
\item For any vertex $v\in\K$, 
\[
(\phi\circ\widetilde{g})(v)=g(v).
\]

\item For any simplex $\sigma_m=[v_0,v_1,\ldots,v_m]$ of $\K$, we have for $0\leq
j\leq m$:
\begin{align*}
d_S(g(v_j),(\phi\circ\widetilde{g})(\widehat{\sigma}_m))	
&=d_S(g(v_j),\phi(\Theta(\sigma_m'))) \\
&\leq d_\M\left(p_j,\Theta(\sigma_m')\right)+2\zeta\beta,
\text{ since }(p_j,g(v_j))\in\C \\
&\leq\frac{3}{4}\diam[\M]{\sigma_m'}+2\zeta\beta,
\text{ by \propref{rad} as }p_j=\Theta({p_j}) \\
&<\frac{3}{4}(1+2\zeta)\beta+2\zeta\beta,
\text{ from \eqnref{sigma}} \\
&=\beta-(1-14\zeta)\beta/4 \\
&\leq\beta,\text{ since }\zeta\leq1/14.
\end{align*}
\end{enumerate}
So, $g(\sigma_m)\cup(\phi\circ\widetilde{g})(\widehat{\sigma}_m)$ is a simplex
of $\Ri_\beta(S)$. 

Therefore, \propref{homotopy} implies that the diagram commutes. Since
$\mod{\K}=\S^m$ and $g$ is arbitrary, we conclude that $\mod{\phi}$ induces a
surjective homomorphism.
\end{proof}

\begin{remark}\label{rem:basepoint} For the description and computation of
homotopy groups, the consideration of basepoint is deliberately ignored
throughout this paper. This is justified, as the considered scale parameters are
such that all the Vietoris--Rips complexes used here are path-connected. We
prove the claim in \propref{path-connected}.	
\end{remark}

\ghhom
\begin{proof}
Since $d_{GH}(\M,S)<\zeta\beta$, let us assume that $\C\in\C(\M,S)$ is a
correspondence with $\mathrm{dist}(\C)<2\zeta\beta$. As a result, we have the
following chain of simplicial maps
\[
\Ri_{(1-2\zeta)\beta}(\M)\xrightarrow{\quad\phi\quad}
\Ri_\beta(S)\xrightarrow{\quad\psi\quad}\Ri_{(1+2\zeta)\beta}(\M),
\]
such that $(p,\phi(p))\in\C$ for all $p\in\M$ and $(\psi(x),x)\in\C$ for all
$x\in S$. There is also the natural inclusion
$\Ri_{(1-2\zeta)\beta}(\M)\xhookrightarrow{\quad
\iota\quad}\Ri_{(1+2\zeta)\beta}(\M)$. We first claim that $(\psi\circ\phi)$ and
$\iota$ are contiguous. To prove the claim, take an $l$--simplex
$\sigma_l=[p_0,p_1,\ldots,p_l]$ in $\Ri_{(1-2\zeta)\beta}(\M)$. So,
$d_\M(p_i,p_j)<(1-2\zeta)\beta$ for all $0\leq i,j\leq l$. We then have
\begin{align*}
d_\M((\psi\circ\phi)(p_i),p_j) &=d_\M(\psi(\phi(p_i)),p_j) \\
&\leq d_S(\phi(p_i),\phi(p_j))+2\zeta\beta \\
&\leq d_\M(p_i,p_j)+2\zeta\beta+2\zeta\beta \\
&<(1-2\zeta)\beta+4\zeta\beta=(1+2\zeta)\beta.
\end{align*}
This implies that $(\psi\circ\phi)(\sigma_l)\cup\iota(\sigma_l)$ is a simplex of
$\Ri_{(1+2\zeta)\beta}(\M)$. Since $\sigma_l$ is an arbitrary simplex, the
simplicial maps $(\psi\circ\phi)$ and $\iota$ are contiguous. Consequently, the
maps $\mod{\psi\circ\phi}$ and $\mod{\iota}$ are homotopic.
		
Since $(1+2\zeta)\beta<\Delta(\M)\leq\rho(\M)$, \thmref{hausmann} implies that
there exist homotopy equivalences $T_1,T_2$ such that the following diagram
commutes (up to homotopy):
\begin{equation*}
\begin{tikzpicture} [baseline=(current  bounding  box.center)]
	\node (k1) at (-2,0) {$\mod{\Ri_{(1-2\zeta)\beta}(\M)}$};
	\node (k2) at (2,0) {$\mod{\Ri_{(1+2\zeta)\beta}(\M)}$};
	\node (k3) at (0,-2) {$\M$};
	\draw[rinclusion] (k1) to node[auto] {$\mod{\iota}$} (k2);
	\draw[map,swap] (k1) to node[auto] {$T_1$} (k3);
	\draw[map,swap] (k2) to node[auto,swap] {$T_2$} (k3);
\end{tikzpicture}
\end{equation*}
So, $\mod{\iota}$ is also a homotopy equivalence. Hence, the induced
homomorphism $\mod{\iota}_*$ on the homotopy groups is an isomorphism. On the
other hand, we already have $\mod{\iota}\simeq\mod{\psi\circ\phi}$. Therefore,
the induced homomorphism $\left(\mod{\psi}_*\circ\mod{\phi}_*\right)$ is also an
isomorphism, implying that $\mod{\phi}_*$ is an injective homomorphism on
$\pi_m\left(\Ri_{(1-2\zeta)\beta}(\M)\right)$. The surjectivity of
$\mod{\phi}_*$ comes from \lemref{gh-sur}. For any $m\geq0$, therefore,
\[
	\mod{\phi}_*:\pi_m\left(\mod{\Ri_{(1-2\zeta)\beta}(\M)}\right)
	\map\pi_m\left(\mod{\Ri_\beta(S)}\right).	
\]
is an isomorphism. 

It follows from Whitehead's theorem that $\mod{\phi}$ is a homotopy equivalence.
Since $\mod{\Ri_{(1-2\zeta)\beta}(\M)}$ is homotopy equivalent to $\M$, we
conclude that $\mod{\Ri_\beta(S)}\simeq\M$.
\end{proof}

\section{Hausdorff Reconstruction of Euclidean Submanifolds}
\label{sec:euclidean}%
Let $\M\subset\R^d$ be a closed, connected (smoothly embedded) Euclidean
submanifold. In this section, we consider the topological reconstruction of $\M$
from a Euclidean subset $S\subset\R^d$ that is close to $\M$ in the Hausdorff
distance. \thmref{h-hom} is the main homotopy equivalence result of the section.
To provide the sampling conditions, we use the reach of $\M$, which we define
first. 

The \emph{medial axis} of a compact subset $X\subset\R^d$ is the set of points
$y\in\R^d$ such that there are at least two (distinct) points $x_1,x_2\in X$
with
\[
	\norm{x_1-y}=\norm{x_2-y}=\min_{x\in X}{\norm{x-y}}.
\]
\begin{definition}[Reach]\label{def:reach} The \emph{reach} of $X$, denoted by
$\tau(X)$, is the minimum of the set of distances between a point of $X$ and a
point on its medial axis. 
\end{definition}
It can be shown that $\tau(\M)$ is positive for a smoothly embedded closed
submanifold $\M$. As we show in the next proposition, the reach controls both
the sectional curvatures and the convexity radius of $\M$. 

Fix a point $p\in\M$. Let $T_p(\M)$ and $T^\perp_p(\M)$ denote, respectively,
the tangent and normal space of $\M$ at $p$. It can be shown that a symmetric,
bilinear form $B(u,v):T_p(\M)\times T_p(\M)\to T^\perp_p(\M)$ exists, called the
\emph{second fundamental form} at $p$. More details can be found in any standard
text, e.g., \cite[Chapter 6]{alma991019430369703276}. For any orthonormal
vectors $u,v\in T_p(\M)$, the \emph{sectional curvature} at $p$ along the plane
generated by $u,v$ is defined as
\begin{equation}\label{eqn:K}
	\kappa(u,v)\eqdef\langle B(u,u), B(v,v)\rangle - \|B(u,v)\|^2,
\end{equation}
where $\langle\cdot,\cdot\rangle$ and $\norm{\cdot}$ denote the standard
Euclidean inner product and norm, respectively. For any normal vector $\eta\in
T_p^\perp(\M)$, one can define a symmetric, bilinear form
\[
	B_\eta(u,v)\eqdef\langle\eta,B(u,v)\rangle,\;\; u,v\in T_p(\M).
\]

Let us denote by $L_\eta:T_p(\M)\to T_p(\M)$ the linear, self-adjoint operator
associated to the bilinear form $B_\eta(u,v)$, i.e., $B_\eta(u,v)=\big\langle
u,L_\eta(v)\big\rangle$ for all $u,v\in T_p(\M)$. Using an important result from
\cite{SMALE} connecting the norm of $L_\eta$ with the reach $\tau(\M)$, we list
the following consequences. A proof is presented in Appendix.
\begin{proposition}\label{prop:B}
Let $p\in\M$ be any point and $u,v\in T_p(\M)$ unit norm. Then,
\begin{enumerate}[(i)]
\item $\norm{B(u,v)}\leq1/\tau(\M)$,
\item $-1/\tau(\M)^2\leq\kappa(\M)\leq1/\tau(\M)^2$,
\item $\rho(\M)\geq\pi\tau(\M)/2$, and
\item $\Delta(\M)\geq\pi\tau(\M)/4$.
\end{enumerate}
\end{proposition}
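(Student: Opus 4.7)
The plan is to deduce all four claims from the Niyogi--Smale--Weinberger bound that for any unit normal $\eta\in T_p^\perp(\M)$, the associated self-adjoint operator satisfies $\|L_\eta\|_{\mathrm{op}}\leq 1/\tau(\M)$; this is the ``important result from \cite{SMALE}'' the proposition refers to.

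First, for part (i), I would fix unit vectors $u,v\in T_p(\M)$ and expand the Euclidean norm as $\|B(u,v)\|=\sup_{\|\eta\|=1}|\langle\eta,B(u,v)\rangle|=\sup_{\|\eta\|=1}|B_\eta(u,v)|$. Then $|B_\eta(u,v)|=|\langle u,L_\eta(v)\rangle|\leq\|u\|\|L_\eta(v)\|\leq \|L_\eta\|_{\mathrm{op}}\leq 1/\tau(\M)$ by Cauchy--Schwarz, and taking the sup over $\eta$ finishes (i). Part (ii) falls right out of (i) by plugging into the definition \eqnref{K}: upper bound via Cauchy--Schwarz,
\[
\kappa(u,v)\leq\langle B(u,u),B(v,v)\rangle\leq\|B(u,u)\|\|B(v,v)\|\leq 1/\tau(\M)^2,
\]
and lower bound via
\[
\kappa(u,v)\geq -\|B(u,v)\|^2\geq -1/\tau(\M)^2;
\]
taking the supremum over orthonormal pairs and points $p$ gives the claimed two-sided bound on $\kappa(\M)$.

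For part (iii), the key is to transfer the extrinsic reach bound to intrinsic comparison geometry. I would invoke the standard fact (established in \cite{SMALE} and classical Riemannian geometry) that the intrinsic injectivity radius of a closed submanifold $\M$ is at least $\pi\tau(\M)$, and then use the Klingenberg-type estimate that $\rho(\M)\geq\min\{\mathrm{inj}(\M)/2,\pi/(2\sqrt{\kappa^+})\}$, where $\kappa^+$ is an upper bound on sectional curvature; part (ii) already gives $\kappa^+\leq 1/\tau(\M)^2$, so both candidates are at least $\pi\tau(\M)/2$. Part (iv) is then a two-line case analysis from \eqnref{delta}: if $\kappa(\M)\leq 0$, then $\Delta(\M)=\rho(\M)\geq\pi\tau(\M)/2\geq\pi\tau(\M)/4$; if $\kappa(\M)>0$, then from (ii) we have $\pi/(4\sqrt{\kappa(\M)})\geq\pi\tau(\M)/4$, and combined with $\rho(\M)\geq\pi\tau(\M)/2$ the minimum is $\pi\tau(\M)/4$.

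The main obstacle is part (iii): the convexity radius bound is not algebraically squeezed out of the second fundamental form the way parts (i), (ii), (iv) are. It requires quoting the intrinsic injectivity radius lower bound $\pi\tau(\M)$ (which is nontrivial and relies on how the tubular neighborhood and cut locus interact for submanifolds of positive reach) together with the Klingenberg/Cheeger-type comparison estimate relating the convexity radius to the injectivity radius and an upper bound on sectional curvature. Once those two Riemannian-geometric ingredients are cited, the arithmetic is immediate; without them, (iii) has no self-contained derivation from the bound $\|L_\eta\|\leq 1/\tau(\M)$ alone.
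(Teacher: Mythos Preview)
Your proposal is correct and matches the paper's proof almost step for step: (i) from $\|L_\eta\|\leq 1/\tau(\M)$ and Cauchy--Schwarz, (ii) from \eqnref{K} and (i), (iii) from the injectivity-radius lower bound $\mathrm{inj}(\M)\geq\pi\tau(\M)$ combined with the Klingenberg-type estimate $\rho(\M)\geq\min\{\mathrm{inj}(\M)/2,\pi/(2\sqrt{\kappa^+})\}$, and (iv) by case analysis on \eqnref{delta}. One attribution to fix in (iii): the bound $\mathrm{inj}(\M)\geq\pi\tau(\M)$ is not in \cite{SMALE}---the paper cites \cite{Alexander2006} for it, and \cite[Proposition~95]{berger2007panoramic} for the convexity-radius estimate.
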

Finally, we obtain the following important bound on the distortion of a pair of
points on $\M$. See Appendix for a proof.
\begin{proposition}\label{prop:distortion} Let $1<\xi<2$ and $p,q\in\M$ be such
that $\norm{p-q}\leq2\left(\frac{\xi-1}{\xi^2}\right)\tau(\M)$. Then,
\[
	d_\M(p,q)\leq\xi\norm{p-q}.
\]
\end{proposition}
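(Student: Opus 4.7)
The plan is to combine the standard Niyogi--Smale--Weinberger chord-to-geodesic distortion estimate with a short algebraic manipulation calibrated to produce exactly the hypothesis of the statement. Specifically, I will invoke (see \cite[Proposition~6.3]{SMALE}) the inequality
$$d_\M(p,q) \;\leq\; \tau(\M)\Bigl(1 - \sqrt{1 - 2\|p-q\|/\tau(\M)}\,\Bigr),$$
valid whenever $\|p-q\| < \tau(\M)/2$. This estimate rests only on \propref{B}(i): for any unit-speed geodesic $\gamma\colon[0,L]\to\M$ regarded as a curve in $\R^d$, the ambient acceleration is $\gamma''(t) = B(\gamma'(t),\gamma'(t))$, hence $\|\gamma''(t)\|\leq 1/\tau(\M)$, after which the NSW argument is an elementary integration/ODE computation that I would either cite or reproduce in a few lines.

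Before applying the chord-length bound, I need to confirm that the hypothesis places $\|p-q\|$ inside its range of validity. The inequality $2(\xi-1)/\xi^2 \leq 1/2$ is equivalent to $(\xi-2)^2 \geq 0$, which holds unconditionally; hence $\|p-q\| \leq 2(\xi-1)\tau(\M)/\xi^2$ does imply $\|p-q\| < \tau(\M)/2$, as required. Writing $r \coloneqq \|p-q\|$ and $\tau \coloneqq \tau(\M)$, the desired conclusion $d_\M(p,q) \leq \xi r$ will then follow from the NSW bound as soon as I establish
$$\tau - \xi r \;\leq\; \tau\sqrt{1 - 2r/\tau}.$$
The hypothesis together with $\xi < 2$ forces $r \leq 2(\xi-1)\tau/\xi^2 \leq \tau/\xi$, so the left-hand side is non-negative and squaring is reversible. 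After squaring, cancelling $\tau^2$, and dividing by $r > 0$ (the case $r=0$ being trivial), the inequality collapses to $\xi^2 r \leq 2(\xi-1)\tau$, which is precisely the hypothesis; this closes the argument.

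I do not anticipate any genuine obstacle beyond book-keeping. The only mild subtleties are (i) tracking the two range restrictions $r < \tau/2$ and $r \leq \tau/\xi$ needed to apply NSW and to justify squaring, both of which follow cleanly from the assumption $1<\xi<2$, and (ii) the decision whether to reproduce the NSW chord-length inequality in full or to cite it; since the underlying second-fundamental-form bound is already recorded in \propref{B}(i), either route is short.
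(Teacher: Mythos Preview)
Your proposal is correct and follows essentially the same route as the paper: both derive (or cite) the Niyogi--Smale--Weinberger chord-to-geodesic bound $d_\M(p,q)\leq\tau-\tau\sqrt{1-2\|p-q\|/\tau}$ from the second-fundamental-form estimate in \propref{B}(i), and then convert it to $d_\M(p,q)\leq\xi\|p-q\|$ by elementary algebra. The only cosmetic difference is that the paper phrases the last step as monotonicity of $f(r)/r$ evaluated at the endpoint $r=2(\xi-1)\tau/\xi^2$, whereas you square directly; both arguments are equivalent and equally short.
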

We now assume that $S\subset\R^d$ is a compact subset and $\beta>0$ a number
such that the Hausdorff distance $d_{H}(\M,S)<\zeta\beta$ for some
$0<\zeta<1/2$. There is a (possibly non-continuous and non-unique) vertex map
$\phi:\M\map S$ such that $\norm{p-\phi(p)}<\zeta\beta$ for all $p\in\M$. Note
that the definition of $\phi$ is analogous to the map \eqnref{phi} defined in
\secref{abstract}. Indeed, the map here is induced by the nearest neighbor
correspondence.

Now, the vertex map $\phi$ extends to a simplicial map:
\begin{equation}\label{eqn:phi-h}
	\Ri_{(1-2\zeta)\beta}(\M)
	\xrightarrow{\quad\phi\quad}\Ri_\beta(S).
\end{equation}
To see that $\phi$ is a simplicial map, take an $l$--simplex
$\sigma_l=[p_0,p_1,\ldots,p_l]$ in $\Ri_{(1-2\zeta)\beta}(\M)$. By the
construction of the Vietoris--Rips complex, we must have
$d_\M(p_i,p_j)<(1-2\zeta)\beta$ for any $0\leq i,j\leq l$. Using the triangle
inequality, we get 
\begin{align*}
\norm{\phi(p_i)-\phi(p_j)}
&\leq\norm{\phi(p_i)-p_i}+\norm{p_i-p_j}+\norm{p_j-\phi(p_j)} \\
&<\zeta\beta+d_\M(p_i,p_j)+\zeta\beta \\
&<\zeta\beta+(1-2\zeta)\beta+\zeta\beta=\beta.
\end{align*}
So, the image $\phi(\sigma_l)=[\phi(p_0),\phi(p_1),\ldots,\phi(p_l)]$ is a
simplex of $\Ri_\beta(S)$.

The following lemma proves that the simplicial map $\phi$ defined in
\eqnref{phi-h} induces a surjective homomorphism on the homotopy groups. 
\begin{lemma}[Surjectivity]\label{lem:h-sur} Let $\M\subset\R^d$ be a closed,
connected submanifold, $S\subset\R^d$ a compact subset, and $\beta>0$ a number
such that
\[
\frac{1}{\zeta}d_{H}(\M,S)<\beta\leq\frac{3(1+2\zeta)(1-14\zeta)}{8(1-2\zeta)^2}
\tau(\M)
\]
for some $0<\zeta<1/14$. Then for any $m\geq0$, the simplicial map
$\phi:\Ri_{(1-2\zeta)\beta}(\M)\to\Ri_\beta(S)$ (as defined in \eqnref{phi-h})
induces a surjective homomorphism on the $m$--th homotopy group.
\end{lemma}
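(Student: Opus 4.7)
The plan is to mirror the proof of \lemref{gh-sur}, with two structural adaptations: the vertex map $\phi$ now comes from the Hausdorff nearest-neighbor rule of \eqnref{phi-h} rather than a Gromov--Hausdorff correspondence, and the Euclidean diameter data on $S\subset\R^d$ must be converted to geodesic diameter data on $\M$ before Jung's theorem and \propref{rad} become applicable. The case $m=0$ is immediate from path-connectedness (\remref{basepoint}).

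For $m\geq 1$, I would fix a triangulation $\K$ of $\S^m$, take an arbitrary simplicial map $g:\K\map\Ri_\beta(S)$, and construct a lift $\widetilde{g}:\sd{\K}\map\Ri_{(1-2\zeta)\beta}(\M)$ that makes the analogue of diagram \eqnref{diag} commute up to homotopy. For each vertex $v$ of $\K$, pick a nearest neighbor $p_v\in\M$ of $g(v)\in S$, so that $\norm{p_v-g(v)}<\zeta\beta$. For each simplex $\sigma_l=[v_0,\ldots,v_l]$ of $\K$, set $\sigma_l'=\{p_{v_0},\ldots,p_{v_l}\}\subset\M$; by the triangle inequality, $\diam[\R^d]{\sigma_l'}<(1+2\zeta)\beta$. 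Define $\widetilde{g}(\widehat{\sigma}_l):=\Theta(\sigma_l')\in\M$, the $d_\M$-circumcenter of $\sigma_l'$.

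The central step converts the Euclidean diameter bound into a geodesic one via \propref{distortion}, applied with $\xi=4(1-2\zeta)/[3(1+2\zeta)]$. The algebraic identity
\[
2\,\frac{\xi-1}{\xi^2}\;=\;\frac{3(1+2\zeta)(1-14\zeta)}{8(1-2\zeta)^2}
\]
shows that the hypothesis on $\beta$ is exactly what licenses the distortion inequality, producing $\diam[\M]{\sigma_l'}\leq\xi(1+2\zeta)\beta=\tfrac{4(1-2\zeta)}{3}\beta$. Since $\Delta(\M)\geq\pi\tau(\M)/4$ by \propref{B}(iv), this geodesic diameter stays below $\Delta(\M)$, so \thmref{Jung} supplies $\Theta(\sigma_l')$. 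Applying \propref{rad} to a typical simplex $\tau_l=[\widehat{\sigma}_0,\ldots,\widehat{\sigma}_l]$ of $\sd{\K}$ with $\sigma_0\prec\cdots\prec\sigma_l$ then yields $\diam[\M]{\widetilde{g}(\tau_l)}\leq\tfrac{3}{4}\diam[\M]{\sigma_l'}\leq(1-2\zeta)\beta$, so $\widetilde{g}$ is simplicial into $\Ri_{(1-2\zeta)\beta}(\M)$.

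Finally, I would verify the contiguity hypothesis used in \lemref{gh-sur}: for each simplex $\sigma_m$ of $\K$ and each vertex $v_j\in\sigma_m$, passing through $p_{v_j}$ and using \propref{rad} with the singleton $\{p_{v_j}\}\subset\sigma_m'$ gives
\[
\norm{g(v_j)-\phi(\Theta(\sigma_m'))}\;\leq\;\zeta\beta+d_\M(p_{v_j},\Theta(\sigma_m'))+\zeta\beta\;\leq\;2\zeta\beta+\tfrac{3}{4}\diam[\M]{\sigma_m'}\;\leq\;\beta,
\]
so $g(\sigma_m)\cup(\mod\phi\circ\mod{\widetilde g})(\widehat{\sigma}_m)$ lies in a common simplex of $\Ri_\beta(S)$, the two maps are contiguous, the diagram commutes up to homotopy, and surjectivity of $\mod\phi_*$ follows. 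The chief obstacle is the simultaneous metric bookkeeping: the constraints coming from the distortion inequality, Jung's theorem, and the $3/4$-contraction of \propref{rad} must all close off under a single choice of $\xi$, and the identity above is what forces the particular form of the upper bound on $\beta$ appearing in the statement.
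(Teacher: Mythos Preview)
Your proposal is correct and tracks the paper's proof essentially step for step: the same lift $\widetilde g$ on $\sd{\K}$ via geodesic circumcenters $\Theta(\sigma_l')$, the same Euclidean-to-geodesic conversion through \propref{distortion} with $\xi=4(1-2\zeta)/[3(1+2\zeta)]$, the same appeal to \propref{rad}, and the same closure of the square. The only caution is terminological: the final step is not contiguity (the maps $g$ and $\phi\circ\widetilde g$ live on $\K$ and $\sd{\K}$, not on a common complex) but rather the two-part hypothesis of \propref{homotopy}; your displayed inequality is exactly its condition (b), and you should also record condition (a), namely $(\phi\circ\widetilde g)(v)=g(v)$ for vertices $v$ of $\K$.
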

\begin{proof}
Due to \propref{path-connected}, the complexes $\Ri_{(1-2\zeta)\beta}(\M)$ and
$\Ri_\beta(S)$ are path-connected. So, the result holds for $m=0$.

For $m\geq1$, let us take an abstract simplicial complex $\K$ such that
$\mod{\K}$ is a triangulation of the $m$--dimensional sphere $\S^m$. In order to
show surjectivity of $\mod{\phi}_*$, we start with a simplicial map
$g:\K\to\Ri_\beta(S)$, and argue that there must exist a simplicial map
$\widetilde{g}:\sd{\K}\map\Ri_{(1-2\zeta)\beta}(\M)$ such that the following
diagram commutes up to homotopy:
\begin{equation}\label{eqn:diag-1}
\begin{tikzpicture} [baseline=(current  bounding  box.center)]
	\node (k1) at (-2,0) {$\mod{\Ri_{(1-2\zeta)\beta}(\M)}$};
	\node (k2) at (2,0) {$\mod{\Ri_\beta(S)}$};
	\node (k3) at (2,-2) {$\mod{\K}$};
	\node (k4) at (-2,-2) {$\mod{\sd{\K}}$};
	\draw[map] (k1) to node[auto] {$\mod{\phi}$} (k2);
	\draw[map,swap] (k3) to node[auto] {$\mod{g}$} (k2);
	\draw[map,dashed] (k4) to node[auto] {$\mod{\widetilde{g}}$} (k1);
	\draw[map, <-, dashed] (k4) to node[auto] {$h^{-1}$} (k3);
\end{tikzpicture}
\end{equation}
where the linear homeomorphism $h:\mod{\sd{\K}}\map\mod{\K}$ maps each vertex
of $\sd{\K}$ to the corresponding point of $\mod{\K}$.

We first note that each vertex of $\sd{\K}$ is the barycenter,
$\widehat{\sigma}_l$, of an $l$--simplex $\sigma_l$ of $\K$. In order to
construct the simplicial map $\widetilde{g}:\sd{\K}\map\Ri_\beta(\M)$, we define
it on the vertices $\sd{\K}$ first, and prove that the vertex map extends to a
simplicial map. 

Let $\sigma_l=[v_0,v_1,\ldots,v_l]$ be an $l$--simplex of $\K$. Since $g$ is a
simplicial map, we have that the image
$g(\sigma_l)=[g(v_0),g(v_1),\ldots,g(v_l)]$ is a subset of $S$ with
$\diam[S]{g(\sigma_l)}<\beta$. There is a corresponding subset
$\sigma_l'=[p_0,p_1,\ldots,p_l]\subset\M$ with $\norm{p_j-g(v_j)}<\zeta\beta$
for $0\leq j\leq l$. Choose $\xi=\frac{4(1-2\zeta)}{3(1+2\zeta)}$. Since
$0<\zeta<1/14$, we observe that $1<\xi<2$. Note from our assumption that 
\[
	\norm{p_i-p_j}<\beta
	\leq\frac{3(1+2\zeta)(1-14\zeta)}{8(1-2\zeta)^2}\tau(\M)
	=2\left(\frac{\xi-1}{\xi^2}\right)\tau(\M).
\]
By \propref{distortion}, we then have
\begin{align*}
d_\M(p_i,p_j)&\leq\xi\norm{p_i-p_j} \\
&=\frac{4(1-2\zeta)}{3(1+2\zeta)}\norm{p_i-p_j}\\
&\leq\frac{4(1-2\zeta)}{3(1+2\zeta)}(\norm{p_i-g(v_i)}+\norm{g(v_i)-g(v_j)}+\norm{g(v_j)-p_j}) \\
&<\frac{4(1-2\zeta)}{3(1+2\zeta)}(\zeta\beta+\norm{g(v_i)-g(v_j)} + \zeta\beta) \\
&<\frac{4(1-2\zeta)}{3(1+2\zeta)}(\beta+2\zeta\beta)=\frac{4}{3}(1-2\zeta)\beta.
\end{align*}
For any $0\leq j\leq l$, define $\sigma_j':=[p_0,p_1,\ldots,p_j]$. Therefore, the
diameter 
\begin{equation}\label{eqn:sigma-1}
\diam{\sigma_j'}<\frac{4}{3}(1-2\zeta)\beta.
\end{equation}  
Moreover, due to our assumption on the upper bound on $\beta$:
\begin{align}\label{eqn:beta}
\diam{\sigma_j'}&<\frac{4}{3}(1-2\zeta)\beta\nonumber \\
&\leq\frac{4}{3}(1-2\zeta)\frac{3(1+2\zeta)(1-14\zeta)}{8(1-2\zeta)^2}\tau(\M) \\
&=\frac{1}{2}\left(\frac{1-12\zeta-28\zeta^2}{1-2\zeta}\right)\tau(\M)
\nonumber \\
&<\frac{1}{2}\cdot1\cdot\tau(\M),\text{ since }\zeta>0\nonumber \\
&<\pi\tau(\M)/4\leq\Delta(\M),\text{ from \propref{B}}.\nonumber	
\end{align}
So, \propref{circum} implies that $\Theta(\sigma_l')$ exists. We define 
\[
\widetilde{g}(\widehat{\sigma}_l)\eqdef\Theta(\sigma_l').
\]
To see that $\widetilde{g}$ extends to a simplicial map, consider a typical
$l$--simplex,
$\tau_l=[\widehat{\sigma}_0,\widehat{\sigma}_1,\ldots,\widehat{\sigma}_l]$, of
$\sd{\K}$, where $\sigma_i\prec\sigma_{i+1}$ for $0\leq i\leq l-1$ and
$\sigma_i\in\K$. Now,
\begin{align*}
\diam{\widetilde{g}(\tau_l)}
&=\diam{[\Theta(\sigma_0'),\Theta(\sigma_1'),
\ldots,\Theta(\sigma_l')]}\\
&=\max_{0\leq i<j\leq l}\{d_\M(\Theta(\sigma_i'),
\Theta(\sigma_j'))\} \\
&\leq\max_{0\leq j\leq l}\left\{\frac{3}{4}\diam{\sigma_j'}
\right\},\text{ by \propref{rad}}\\
&\leq\frac{3}{4}\diam{\sigma_l'} \\
&<\frac{3}{4}\cdot\frac{4}{3}(1-2\zeta)\beta,\text{ from \eqnref{sigma-1}} \\
&=(1-2\zeta)\beta.
\end{align*}
So, $\widetilde{g}(\tau_l)$ is a simplex of $\Ri_{(1-2\zeta)\beta}(\M)$. This
implies that $\widetilde{g}$ is a simplicial map. 
	
We invoke \propref{homotopy} to show that Diagram \eqnref{diag-1} commutes up to
homotopy. We need to argue that the simplicial maps $g$ and
$(\phi\circ\widetilde{g})$ satisfy the conditions of \propref{homotopy}:
\begin{enumerate}[(a)]
\item For any vertex $v\in\K$, 
\[
	(\phi\circ\widetilde g)(v)=g(v).
\]

\item For any simplex $\sigma_m=[v_0,v_1,\ldots,v_m]$ of $\K$, we have for
$0\leq j\leq m$:
\begin{align*}
\norm{g(v_j)-(\phi\circ\widetilde g)(\widehat{\sigma}_m)}
&<\norm{p_j-\Theta(\sigma'_m)}+2\zeta\beta\\
&\leq d_\M(p_j,\Theta(\sigma_m'))+2\zeta\beta \\
&=\frac{3}{4}\diam{\sigma_m'}+2\zeta\beta,
\text{ by \propref{rad} as }\Theta(p_j)=p_j \\
&<(1-2\zeta)\beta+2\zeta\beta=\beta.
\end{align*}
So, $g(\sigma_m)\cup(\phi\circ\widetilde g)(\Theta(\sigma_m))$ is a simplex of
$\Ri_\beta(S)$.
\end{enumerate}
Therefore, \propref{homotopy} implies that the diagram commutes. Since
$\mod{\K}=\S^m$ and $g$ is arbitrary, we conclude that $\phi$ induces a
surjective homomorphism on the $m$--th homotopy group.
\end{proof}

\hhom
\begin{proof}
Since $d_{H}(\M,S)<\zeta\beta$, we have the following chain of simplicial maps
\[
	\Ri_{(1-2\zeta)\beta}(\M)\xrightarrow{\quad\phi\quad}
	\Ri_\beta(S)\xrightarrow{\quad\psi\quad}
	\Ri_{\frac{4}{3}(1-2\zeta)\beta}(\M),
\]
such that $\norm{p-\phi(p)}<\zeta\beta$ for all $p\in\M$ and
$\norm{\psi(x)-x}<\zeta\beta$ for all $x\in S$. The second simplicial map can be
justified by \eqnref{sigma-1}.
		   
There is also the natural inclusion
$\Ri_{(1-2\zeta)\beta}(\M)\xhookrightarrow{\quad
\iota\quad}\Ri_{\frac{4}{3}(1-2\zeta)\beta}(\M)$. We first claim that
$(\psi\circ\phi)$ and $\iota$ are contiguous. To prove the claim, take an
$l$--simplex $\sigma_l=[p_0,p_1,\ldots,p_l]$ of $\Ri_{(1-2\zeta)\beta}(\M)$.
Then, $d_\M(p_i,p_j)<(1-2\zeta)\beta$ for all $0\leq i,j\leq l$. So, we have
\begin{align*}
d_\M((\psi\circ\phi)(p_i),p_j) &=d_\M(\psi(\phi(p_i)),p_j) \\
&\leq\frac{4(1-2\zeta)}{3(1+2\zeta)}\norm{\psi(\phi(p_i))-p_j},
\text{ by \propref{distortion} for }\xi=\frac{4(1-2\zeta)}{3(1+2\zeta)} \\
&<\frac{4(1-2\zeta)}{3(1+2\zeta)}(\norm{\phi(p_i)-\phi(p_j)}+2\zeta\beta) \\
&<\frac{4(1-2\zeta)}{3(1+2\zeta)}(\norm{p_i-p_j}+2\zeta\beta+2\zeta\beta) \\
&<\frac{4(1-2\zeta)}{3(1+2\zeta)}((1-2\zeta)\beta+4\zeta\beta)
=\frac{4}{3}(1-2\zeta)\beta.
\end{align*}
This implies that $(\psi\circ\phi)(\sigma_l)\cup\iota(\sigma_l)$ is a simplex of
$\Ri_{\frac{4}{3}(1-2\zeta)\beta}(\M)$. Since $\sigma_l$ is an arbitrary
simplex, the simplicial maps $(\psi\circ\phi)$ and $\iota$ are contiguous.
Consequently, the maps $\mod{\psi\circ\phi}$ and $\mod{\iota}$ are homotopic.
		
As already argued in \eqnref{beta}, we have
$\frac{4}{3}(1-2\zeta)\beta<\Delta(\M)\leq\rho(\M)$. So, \thmref{hausmann}
implies there exist homotopy equivalences $T_1,T_2$ such that the following
diagram commutes (up to homotopy):
\begin{equation*}
\begin{tikzpicture} [baseline=(current  bounding  box.center)]
\node (k1) at (-3,0) {$\mod{\Ri_{(1-2\zeta)\beta}(\M)}$};
\node (k2) at (3,0) {$\mod{\Ri_{\frac{4}{3}(1-2\zeta)\beta}(\M)}$};
\node (k3) at (0,-2) {$\M$};
\draw[rinclusion] (k1) to node[auto] {$\mod{\iota}$} (k2);
\draw[map,swap] (k1) to node[auto] {$T_1$} (k3);
\draw[map,swap] (k2) to node[auto,swap] {$T_2$} (k3);
\end{tikzpicture}
\end{equation*}
So, $\mod{\iota}$ is also a homotopy equivalence. Hence, the induced
homomorphism $\mod{\iota}_*$ on the homotopy groups is an isomorphism. On the
other hand, $\mod{\iota}\simeq\mod{\psi\circ\phi}$. Therefore, the induced
homomorphism $(\mod{\psi}_*\circ\mod{\phi}_*)$ is an isomorphism, implying that
$\mod{\phi}_*$ is an injective homomorphism.

By \lemref{h-sur} for any $m\geq0$, 
\[
	\mod{\phi}_*:\pi_m(\mod{\Ri_{(1-2\zeta)\beta}(\M)})
	\map\pi_m(\mod{\Ri_\beta(S)}).	
\]
is an isomorphism. By Whitehead's theorem, we have that $\mod{\phi}$ is a
homotopy equivalence. Since $\mod{\Ri_{(1-2\zeta)\beta}(\M)}$ is homotopy
equivalent to $\M$, we conclude that $\mod{\Ri_\beta(S)}\simeq\M$.
\end{proof}	

\section{Conclusion}
The current work provides satisfactory answers to the quest of recovering a
closed Riemannian manifold $\M$ from the Vietoris--Rips complexes of a compact
metric space $S$ close to it---both in the Gromov--Hausdorff and Hausdorff
distance. The study sparks a number of intriguing future research directions.
Although we provide a homotopy equivalent recovery of a Euclidean submanifold,
the resulting complex $\Ri_\beta(S)$, being very high-dimensional, does not
produce a natural embedding for the reconstruction. Consequently, our result for
submanifold reconstruction does not lend itself well to recovering the geometry.
Since $S$ is a subset of $\R^d$, one may consider the shadow (as defined by
Chambers et al. \cite{Chambers2010}) of the resulting Vietoris--Rips complex as
a \emph{geometric reconstruction} of $\M$. As pointed out in \cite[Proposition
5.3]{Chambers2010}, the shadow of a complex is notorious for being topologically
unfaithful. When the Hausdorff distance between $S$ and $\M$ is very small, however, we
conjecture to have homotopy equivalent shadow of $\Ri_\beta(S)$, hence providing
a homotopy equivalent reconstruction of $\M$ with an embedding in the same
ambient space. 

\paragraph{Acknowledgments}
The author would like to thank Henry Adams for providing constructive feedback
on the manuscript, and for hosting the author at the University of Florida,
Gainesville during the summer of 2023.

\bibliographystyle{plain}
\bibliography{main}

\begin{appendices}

\section{Additional Proofs}
\begin{proof}[Proof of \propref{circum}] We divide the proof into the following
two cases depending on the sign of $\kappa:=\kappa(\M)$.

\paragraph{Case 1 ($\kappa\leq0$)}
From \eqnref{Jung}, we get 
\[
	\diam{A}\geq2\mathfrak{R}(A)\sqrt{\frac{n+1}{2n}}\geq2\mathfrak{R}(A)\frac{1}{\sqrt{2}}\geq\frac{4}{3}\mathfrak{R}(A).
\]
	
\paragraph{Case 2 ($\kappa>0$)}
First note that the closed ball $\overline{\B_\M(p,\diam{A})}$ for any $p\in A$
contains $A$ entirely, implying that $\mathfrak{R}(A)\leq\diam{A}$. Since
$\diam{A}<\Delta(\M)\leq\frac{\pi}{4\sqrt{\kappa}}$, we therefore have
$\mathfrak{R}(A)\in\left[0,\frac{\pi}{4\sqrt{\kappa}}\right]$. 

Let us now define the function
$J(r)\eqdef\frac{2}{\sqrt{\kappa}}\sin^{-1}\left(\sqrt{\frac{n+1}{2n}}\sin{\sqrt{\kappa}r}\right)$,
and show that $J(r)/r$ is a strictly decreasing function for
$r\in\left(0,\frac{\pi}{4\sqrt{\kappa}}\right)$. We take its derivative:
\begin{equation}\label{eqn:J}
\frac{d}{dr}(J(r)/r)=\frac{rJ'(r)-J(r)}{r^2}=\frac{f(r)}{r^2},
\end{equation}
where $f(r)\eqdef rJ'(r)-J(r)$. We observe that $f(r)$ is strictly decreasing,
since
\[
f'(r)=rJ''(r)=r\frac{2\sqrt{\kappa}\sqrt{\frac{n+1}{2n}}\sin{\sqrt{\kappa}r}}{\left(1-\frac{n+1}{2n}\sin^2{\sqrt{\kappa}r}\right)^{3/2}}\left(\frac{1-n}{2n}\right)<0
\]
for $r\in\left(0,\frac{\pi}{4\sqrt{\kappa}}\right)$ and $n\geq2$. Then it
follows from $f(0)=0$ that $f(r)<0$ on
$r\in\left(0,\frac{\pi}{4\sqrt{\kappa}}\right)$. When $n=1$, we remark that the
result can be established using Case 1, since $\M$ must be flat. 
	
So, \eqnref{J} implies that the function $J(r)/r$ is a strictly decreasing
function for $r\in\left(0,\frac{\pi}{4\sqrt{\kappa}}\right)$. Its minimum, as a
result, is attained at $r=\frac{\pi}{4\sqrt{\kappa}}$. So, for
$r\in\left(0,\frac{\pi}{4\sqrt{\kappa}}\right]$ we get
\[
\frac{J(r)}{r}\geq\frac{2}{\sqrt{\kappa}}\sin^{-1}\left(\sqrt{\frac{n+1}{2n}}\sin{\frac{\pi}{4}}\right)\frac{4\sqrt{\kappa}}{\pi}
\geq\frac{8}{\pi}\sin^{-1}\left(\sqrt{\frac{1}{2}}\sin{\frac{\pi}{4}}\right)
=\frac{8}{\pi}\frac{\pi}{6}=\frac{4}{3}.
\] 
On the other hand, \eqnref{Jung} implies that $\diam{A}\geq J(\mathfrak{R}(A))$
for $\mathfrak{R}(A)\in\left[0,\frac{\pi}{4\sqrt{\kappa}}\right]$. Hence,
$\diam{A}\geq\frac{4}{3}\mathfrak{R}(A)$ as desired.
	
\end{proof}

\begin{proposition}[Path-connectedness]\label{prop:path-connected} Let $(S,d_S)$
be a metric space and $\beta>0$ a number such that $d_{GH}(S,\M)<\beta$, then
for any positive $\alpha$, the geometric complex of $\Ri_{\alpha+2\beta}(S)$ is
path-connected. The result also holds if $M\subset\R^d$ and $S\subset\R^d$ with
$d_H(\M,S)<\beta$.
\end{proposition}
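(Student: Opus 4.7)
The plan is to reduce path-connectedness of the geometric realization to the existence of edge-paths in the $1$-skeleton, then build such paths by transporting a continuous path in $\M$ to a discrete sequence in $S$ through the correspondence.

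First I would observe that $|\Ri_{\alpha+2\beta}(S)|$ is path-connected if and only if for every pair of vertices $x_1,x_2\in S$ there is a finite sequence $x_1=z_0,z_1,\ldots,z_N=x_2$ in $S$ such that each consecutive pair $\{z_i,z_{i+1}\}$ forms a $1$-simplex, i.e., $d_S(z_i,z_{i+1})<\alpha+2\beta$. This is because the $1$-skeleton of any simplicial complex is path-connected if and only if its realization is, and a continuous path in the $1$-skeleton can be routed through vertices.

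Next, in the Gromov--Hausdorff case, I would use the hypothesis $d_{GH}(\M,S)<\beta$ to fix a correspondence $\C\in\C(\M,S)$ with $\mathrm{dist}(\C)<2\beta$. For the chosen $x_1,x_2\in S$, pick $p_1,p_2\in\M$ such that $(p_i,x_i)\in\C$. Since $\M$ is a connected smooth manifold, it is path-connected, so there exists a continuous curve $\gamma\colon[0,1]\to\M$ with $\gamma(0)=p_1$, $\gamma(1)=p_2$. Compactness of $[0,1]$ yields uniform continuity of $\gamma$ with respect to $d_\M$, so I can choose a partition $0=t_0<t_1<\cdots<t_N=1$ fine enough that $d_\M(\gamma(t_i),\gamma(t_{i+1}))<\alpha$ for every $i$. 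Setting $q_i=\gamma(t_i)$, pick $z_i\in S$ with $(q_i,z_i)\in\C$, and in particular take $z_0=x_1$ and $z_N=x_2$ (valid since $(p_1,x_1),(p_2,x_2)\in\C$). The distortion bound then gives
\[
d_S(z_i,z_{i+1})\leq d_\M(q_i,q_{i+1})+2\beta<\alpha+2\beta,
\]
so $\{z_i,z_{i+1}\}$ is an edge, producing the required edge-path.

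For the Hausdorff case ($\M,S\subset\R^d$, $d_H(\M,S)<\beta$), the argument is the same, but now the correspondence is realized by a nearest-neighbor map $\phi\colon\M\to S$ with $\|p-\phi(p)\|<\beta$. Given $x_1,x_2\in S$, pick $p_1,p_2\in\M$ with $\|p_i-x_i\|<\beta$ (possible by $d_H(\M,S)<\beta$), discretize a path in $\M$ between them as above using Euclidean distance, and transfer back by $\phi$, setting $z_0=x_1$ and $z_N=x_2$; a triangle inequality in $\R^d$ then yields $\|z_i-z_{i+1}\|<\alpha+2\beta$, as needed. I do not anticipate any substantive obstacle: the only mildly delicate point is matching the endpoints $z_0=x_1$ and $z_N=x_2$ under the correspondence, which is handled by choosing the preimages $p_1,p_2$ of $x_1,x_2$ to begin with, rather than arbitrary preimages of the $q_i$.
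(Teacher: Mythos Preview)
Your proposal is correct and follows essentially the same route as the paper: pick correspondents in $\M$ for the two given points of $S$, discretize a path in $\M$ into steps of length less than $\alpha$, and push the sequence back to $S$ through the correspondence to obtain an edge-path in $\Ri_{\alpha+2\beta}(S)$. The paper phrases the middle step as ``$\M$ path-connected implies $\Ri_\alpha(\M)$ path-connected'' rather than invoking uniform continuity explicitly, but the content is identical.
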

\begin{proof}
Let $a,b\in S$, then there exist points $a',b'\in\M$ such that
$(a',a),(b',b)\in\C$, where $\C\in\C(\M,S)$ is a correspondence with
$\mathrm{dist}(\C)<2\beta$. Since $\M$ is assumed to be path-connected, so is
$\Ri_\alpha(\M)$. As a result, there exists a sequence
$\{p_i'\}_{i=0}^{m+1}\subset\M$ forming a path in $\Ri_\alpha(\M)$ joining $a'$
and $b'$. In other words, $p_0'=a'$, $p_{m+1}'=b'$, and
$d_\M(p_i',p_{i+1}')<\alpha$ for $0\leq i\leq m$. There is also a corresponding
sequence $\{x_i\}_{i=0}^{m+1}\subset S$ such that $x_0=a$, $x_{m+1}=b$, and
$(p_i',x_i)\in\C$ for all $i$. We note that
\[
	d_S(x_i,x_{i+1})\leq d_\M(p_i',p_{i+1}')+2\beta<\alpha+2\beta.	
\]
So, the sequence $\{x_i\}$ produces a path in $\Ri_{\alpha+2\beta}(S)$ joining
$a$ and $b$. We conclude that the geometric complex of $\Ri_{\alpha+2\beta}(S)$
is path-connected.

The Hausdorff case uses the exact same argument.
\end{proof}

\begin{proof}[Proof of \propref{B}]
In \cite[Proposition 6.1]{SMALE}, it has been shown that
$\norm{L_\eta}\leq1/\tau(\M)$ for any unit norm $\eta\in T^\perp_p(\M)$.
	
\begin{enumerate}[(i)]
\item Take $\eta=B(u,v)/\norm{B(u,v)}$, to get $\norm{B(u,v)}=\big\langle \eta,
B(u,v)\big\rangle=\big\langle u, L_\eta(v)\big\rangle$. Since $u,v$ are unit
norm, applying the Cauchy-Schwarz inequality we get
\[
\norm{B(u,v)}
\leq\norm{u}\norm{L_\eta}\norm{v}=\norm{L_\eta}\leq\frac{1}{\tau(\M)}.
\]
	
\item Use \eqnref{K}, then apply the Cauchy-Schwarz inequality: 
\[
\kappa_p(u,v)\leq\langle B(u,u),B(v,v)\rangle\leq\norm{B(u,u)}\cdot\norm{B(v,v)}
\leq\frac{1}{\tau(\M)^2}.
\] So, $\kappa(\M)\leq1/\tau(\M)^2$.
	
Similarly from \eqnref{K}, we get
\[
-\kappa_p(u,v)\leq\norm{B(u,v)}^2\leq\frac{1}{\tau(\M)^2}.
\]
So, $\kappa(\M)\geq-1/\tau(\M)^2$. 
	
\item From \cite[Proposition 95]{berger2007panoramic}, we have
\[
\rho(\M)\geq\begin{cases}
\min\left\{\frac{\pi}{2\sqrt{\kappa(\M)}},\frac{1}{2}\;\mathrm{inj}(\M)\right\},
&\text{ for }\kappa(\M)>0\\
\frac{1}{2}\;\mathrm{inj}(\M),&\text{ for }\kappa(\M)\leq0.
\end{cases}
\] 
See \cite[Definition 23]{berger2007panoramic} for the definition of injectivity
radius $\mathrm{inj}(\M)$. On the other hand, \cite[Corollary 4]{Alexander2006}
shows that $\mathrm{inj}(\M)\geq\pi\tau(\M)$. This, combining with (ii), implies
that $\rho(\M)\geq\pi\tau(\M)/2$.
	
\item Follows directly from \eqnref{delta} due to (iii). 
\end{enumerate}
\end{proof}

\begin{proof}[Proof of \propref{distortion}]
Let $\gamma:[0,L]\to\M$ be an arc-length parametrized, unit speed,
length-minimizing geodesic joining $p$ and $q$, where $L=d_\M(p,q)$. From the
fundamental theorem of calculus, we have
\[
	\gamma(L)-\gamma(0)=\int_0^L \gamma'(t)dt,
\]
and
\[
	\gamma'(t)=\gamma'(0)+\int_0^t \gamma''(s)ds.
\]
Combining, we can write
\[
	\gamma(L)-\gamma(0)=\int_0^L\left[\gamma'(0)+\int_0^t \gamma''(s)ds\right]dt
	=L\gamma'(0) + \int_0^L\left[\int_0^t \gamma''(s) ds\right]dt.
\]
From the triangle inequality and the fact that $\gamma$ is unit speed,
\[
	\norm{\gamma(L)-\gamma(0)}\geq L\norm{\gamma'(0)} - 
	\norm{\int_0^L\left[\int_0^t \gamma''(s) ds\right]dt}
	\geq L-\int_0^L\left[\int_0^t\norm{\gamma''(s)} ds\right]dt.
\]
We note that $\norm{p-q}=\norm{\gamma(L)-\gamma(0)}$ and $L=d_\M(p,q)$. Since
$\gamma$ is geodesic, $\gamma''(s)=B\left(\gamma'(s),\gamma'(s)\right)$ for all
$s$. So,
\[
	\norm{p-q}
	\geq L-\int_0^L\left[\int_0^t\norm{B(\gamma'(s),\gamma'(s))} ds\right]dt.
\]
On the other hand, $\norm{B(u,u)}\leq1/\tau(\M)$ from \propref{B} implies that
\[
	\norm{p-q}\geq L-\frac{1}{\tau(\M)}\int_0^L\left[\int_0^t ds\right]dt
	=L-\frac{1}{\tau(\M)}\frac{L^2}{2}
	=d_\M(p,q)-\frac{1}{2\tau(\M)}[d_\M(p,q)]^2.
\]
The inequality is satisfied only if $\norm{p-q}\leq\tau(\M)/2$ and
\begin{equation}\label{eqn:tau}
	d_\M(p,q)\leq\tau(\M)-\tau(\M)\sqrt{1-\frac{2\norm{p-q}}{\tau(\M)}}.
\end{equation}
Let $f(r)\eqdef\tau(\M)-\tau(\M)\sqrt{1-\frac{2r}{\tau(\M)}}$ for $0<
r<2\left(\frac{\xi-1}{\xi^2}\right)\tau(\M)$. We note by taking derivative that
$\frac{f(r)}{r}$ is an increasing function. The maximum is attained for
$r=2\left(\frac{\xi-1}{\xi^2}\right)\tau(\M)$. Therefore,
\[
\frac{f(r)}{r}
\leq\frac{1-\frac{\sqrt{(2-\xi)^2}}{\xi}}{\frac{2(\xi-1)}{\xi^2}}
=\xi\frac{\xi-\sqrt{(2-\xi)^2}}{2(\xi-1)}
=\xi\frac{\xi-(2-\xi)}{2(\xi-1)}
=\xi,\text{ since }1<\xi<2.
\]
Hence, \eqnref{tau} implies 
\[
	d_\M(p,q)\leq f(\norm{p-q})
	\leq\xi\norm{p-q},\text{ since }\norm{p-q}\leq2\left(\frac{\xi-1}{\xi^2}\right)\tau(\M)\leq\tau(\M)/2.
\]
\end{proof}

\begin{proposition}[Commuting Diagram]\label{prop:homotopy} Let $\K$ be a pure
	$m$--complex and $\L$ a flag complex. Let
	$f:~\K\map\L$ and $g:\sd{\K}\map\L$ be simplicial maps such that 
	\begin{enumerate}[(a)]
		\item $g(v)=f(v)$ for every vertex $v$ of $\K$,
		\item $f(\sigma)\cup g(\bc{\sigma})$ is a simplex of $\L$ whenever
		$\sigma$ is a simplex of $\K$.
	\end{enumerate}
	Then, the following diagram commutes up to homotopy: 
	\begin{equation*}
		\begin{tikzpicture} [baseline=(current  bounding  box.center)]
			\node (k1) at (-2,-2) {$\mod{\sd{\K}}$};
			\node (k2) at (2,-2) {$\mod{\K}$};
			\node (k3) at (0,0) {$\mod{\L}$};
			\draw[map] (k2) to node[auto,swap] {$h^{-1}$} (k1);
			\draw[map,swap] (k1) to node[auto,swap] {$\mod{g}$} (k3);
			\draw[map,swap] (k2) to node[auto] {$\mod{f}$} (k3);
		\end{tikzpicture}
	\end{equation*}
	where $h$ is a linear homeomorphism sending each vertex of $\sd{\K}$ to the
    corresponding point of $\mod{\K}$.
\end{proposition}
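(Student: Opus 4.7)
My plan is to establish $\mod{f} \circ h \simeq \mod{g}$ through a contractible carrier argument, assigning to each simplex of $\sd{\K}$ a contractible subcomplex of $\L$ that contains the images of both maps, and then building the homotopy skeleton by skeleton.

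To a simplex $\tau = [\bc{\sigma_0}, \ldots, \bc{\sigma_l}]$ of $\sd{\K}$ arising from a chain $\sigma_0 \prec \cdots \prec \sigma_l$ in $\K$, I would associate the full subcomplex $\L_\tau$ of $\L$ on the vertex set $f(\sigma_l) \cup g(\tau)$. The decisive point is that $g(\bc{\sigma_l})$ serves as a cone apex of $\L_\tau$: for any simplex $S$ of $\L_\tau$, every pair of vertices of $S \cup \{g(\bc{\sigma_l})\}$ is an edge of $\L$. Indeed, hypothesis (b) applied to $\sigma_l$ joins $g(\bc{\sigma_l})$ to every vertex of $f(\sigma_l)$, and $g(\tau)$ is already a simplex of $\L$ (because $g$ is simplicial), so its vertices are pairwise joined. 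Flagness of $\L$ then upgrades these pairwise edges to a full simplex $S \cup \{g(\bc{\sigma_l})\}$ of $\L_\tau$. Hence $\L_\tau$ is a simplicial cone and $\mod{\L_\tau}$ is contractible.

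Both maps are carried by $\tau \mapsto \L_\tau$: plainly $\mod{g}(\mod{\tau}) \subseteq \mod{g(\tau)} \subseteq \mod{\L_\tau}$, and since every barycenter $\bc{\sigma_i}$ lies in $\mod{\sigma_l}$ (using $\sigma_0 \prec \cdots \prec \sigma_l$), we obtain $h(\mod{\tau}) \subseteq \mod{\sigma_l}$, whence $(\mod{f} \circ h)(\mod{\tau}) \subseteq \mod{f(\sigma_l)} \subseteq \mod{\L_\tau}$. The carrier is also monotone, $\L_{\tau'} \subseteq \L_\tau$ for $\tau' \preceq \tau$, because the maximal chain element of $\tau'$ is a subsimplex of $\sigma_l$ while $g(\tau') \subseteq g(\tau)$.

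The homotopy $H \colon \mod{\sd{\K}} \times [0, 1] \to \mod{\L}$ is then built inductively on the skeleta of $\sd{\K}$. On each vertex $\bc{\sigma}$ of $\sd{\K}$, the straight-line segment from $\mod{g}(\bc{\sigma})$ to $\mod{f}(\bc{\sigma})$ already lies in the simplex $f(\sigma) \cup \{g(\bc{\sigma})\}$ supplied by (b), and condition (a) simply makes this segment degenerate whenever $\sigma$ is a vertex of $\K$. Inductively, once $H$ has been defined on the $k$--skeleton so that $H(\mod{\tau'} \times [0, 1]) \subseteq \mod{\L_{\tau'}}$ for every $\tau'$ of dimension at most $k$, the data already fixed on $\partial(\mod{\tau} \times [0, 1])$ for a $(k+1)$--simplex $\tau$ of $\sd{\K}$ is a continuous map from a sphere into the contractible space $\mod{\L_\tau}$, and so extends over the whole prism inside $\mod{\L_\tau}$. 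The main technical care is in choosing these extensions coherently along shared faces, which is precisely where monotonicity of the carrier is indispensable. The resulting map realizes $\mod{f} \circ h \simeq \mod{g}$ and yields the desired commutativity of the diagram up to homotopy.
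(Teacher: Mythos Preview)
Your argument is correct and follows a genuinely different route from the paper's. The paper builds the homotopy explicitly as a staircase of straight-line interpolations: it partitions $[0,1]$ as $0=t_0<\cdots<t_m=1$, sets
\[
H(x,t_i)=\sum_{j\le i}\zeta_j\,(\mod{f}\circ h)(\bc{\sigma}_j)+\sum_{j>i}\zeta_j\,\mod{g}(\bc{\sigma}_j)
\]
for $x=\sum_j\zeta_j\bc{\sigma}_j$, and uses flagness to check that each consecutive pair $H(x,t_{i-1}),H(x,t_i)$ lies in a common simplex $f(\sigma_i)\cup g([\bc{\sigma}_i,\ldots,\bc{\sigma}_m])$ of $\L$, so that the linear segment between them makes sense. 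Your contractible-carrier approach is more structural: the same flagness verification is repackaged into the single observation that the full subcomplex on $f(\sigma_l)\cup g(\tau)$ is a cone with apex $g(\bc{\sigma}_l)$, after which the standard skeleton-by-skeleton extension does the rest. The paper's version is entirely explicit and self-contained; yours is shorter once one grants the carrier lemma, and it in fact never uses hypothesis~(a)---only~(b) and flagness are needed for the cone structure and for carrying both maps---whereas the paper's explicit formula relies on~(a) to pin down $H(\cdot,0)=\mod{g}$.
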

\begin{proof}
We show that the maps $\mod{g}$ and $\left(\mod{f}\circ h\right)$ are homotopic
by constructing an explicit homotopy $H:~\mod{\sd{\K}}\times[0,1]\map\mod{\L}$
with $H(\cdot,0)=g(\cdot)$ and $H(\cdot,1)=\left(\mod{f}\circ h\right)(\cdot)$.
The commutativity of the diagram then follows, since $h$ is a homeomorphism. 
	
The complex $\K$ is taken to be $m$--dimensional. Without any loss of
generality, we can assume that every point of $\sd{\K}$ belongs to an
$m$--simplex. Take an arbitrary $x\in\mod{\sd{\K}}$. We can write $x$ in its
barycentric coordinates as $x=\sum_{i=0}^m\zeta_i\widehat{\sigma}_i$, where
$\sigma_m=[a_0,a_1,\ldots,a_m]$ is an $m$--simplex of $\K$ and
$\sigma_i=[a_0,a_1,\ldots,a_i]$ for $0\leq i\leq m$. Consider a partition 
\[0=t_0<t_1<\ldots<t_{m}=1\] of $[0,1]$. We first define the homotopy
$H(\cdot,t)$ at $t=t_i$ for $i=0,1,\ldots,m$, and then show that we can
\emph{interpolate} $H$ continuously (using the straight-line homotopy) for any
$t\in[t_{i-1},t_i]$. For any $0\leq i\leq m$, define 
\begin{equation}\label{eqn:homotopy}
		H(x,t_i)=\sum_{j=0}^i\zeta_j\left(\mod{f}\circ
		h\right)(\widehat{\sigma}_j)+\sum_{j=i+1}^m\zeta_j\mod{g}(\widehat{\sigma}_j).
\end{equation} 
From the above definition, we note for $i=0$ that 
\begin{align*}
H(x,0)=\zeta_0\left(\mod{f}\circ
h\right)(\widehat{\sigma}_0)+\sum_{j=1}^m\zeta_j\mod{g}(\widehat{\sigma}_j)
&=\zeta_0\mod{g}(\widehat{\sigma}_0)
+\sum_{j=1}^m\zeta_j\mod{g}(\widehat{\sigma}_j)
\\&=\sum_{j=0}^m\zeta_j\mod{g}(\widehat{\sigma}_j)=\mod{g}(x).	
\end{align*}
The second equality is due to condition (a) and fact that $\sigma_0$ is a vertex
of $\K$. Also, for $i=m$ we note that 
\[
H(x,1)=\sum_{j=0}^m\zeta_j\left(\mod{f}\circ h\right)(\widehat{\sigma}_j)
=\left(\mod{f}\circ h\right)\left(\sum_{j=0}^m\zeta_j\widehat{\sigma}_j\right)
=\left(\mod{f}\circ h\right)(x).
\] 
The second equality is due to the fact that $h$ is a linear homeomorphism.
	
We now fix $0\leq i\leq m$, and extend the definition of $H(x,t)$ for any
$t\in[t_{i-1},t_i]$ by using the straight-line joining $H(x,t_{i-1})$ and
$H(x,t_{i})$. Such an extension is justified, we show now both $H(x,t_{i-1})$
and $H(x,t_{i})$ belong to $\mod{\eta}$ for some simplex $\eta$ of $\L$. 

Let us take $\eta$ to be the set $f(\sigma_i) \cup
g([\widehat{\sigma}_i,\widehat{\sigma}_{i+1},\ldots,\widehat{\sigma}_m])$. Since
$g$ is a simplicial map, $\eta$ can be written as $f(\sigma_i)\cup
[g(\widehat{\sigma}_i),g(\widehat{\sigma}_{i+1}),\ldots,g(\widehat{\sigma}_m)]$.
Now, note from condition (b) that $f(\sigma_j)\cup g(\widehat{\sigma}_j)\in\L$
for any $i\leq j\leq m$. Since $f$ is a simplicial map, $f(\sigma_i)\cup g(\widehat{\sigma}_j)$
is a face of $f(\sigma_j)\cup g(\widehat{\sigma}_j)$, hence a simplex of $\L$.
Consequently, any pair of elements in the set $\eta=f(\sigma_i)\cup
[g(\widehat{\sigma}_i),g(\widehat{\sigma}_{i+1}),\ldots,g(\widehat{\sigma}_m)]$
is a simplex of $\L$. Since $\L$ is a flag complex, it implies that $\eta$
is a simplex of $\L$. 

From the definition \eqnref{homotopy} and the fact that
$h(\widehat{\sigma}_{l})\in\mod{\sigma_i}$ for all $0\leq l\leq i$, we conclude
that both $H(x,t_{i-1})$ and $H(x,t_{i})$ belong to $\mod{\eta}$. So, $H(x,t)$
is well-defined and continuous for $t\in[t_{i-1},t_i]$. Moreover, the images
agree at the endpoints as noted from \eqnref{homotopy}. Therefore, $H$ defines
the desired homotopy.
\end{proof}

\end{appendices}

\end{document}